\documentclass[11pt,a4paper,epsf]{article}
\usepackage{latexsym}
\usepackage[utf8]{inputenc}
\usepackage{amsmath}
\usepackage{amsthm}
\usepackage{amsfonts}
\usepackage{amssymb}
\usepackage{epsfig,xcolor}
\usepackage{nicefrac}
\usepackage[all]{xy}
\usepackage{graphicx}
\usepackage{enumerate}
\DeclareGraphicsRule{.tif}{png}{.png}{`convert #1 `basename #1 .tif`.png}

\newtheorem{theorem}{Theorem}[section]

\newtheorem{proposition}[theorem]{Proposition}
\newtheorem{lemma}[theorem]{Lemma}
\newtheorem{claim}[theorem]{Claim}

\newtheorem{thm}{Theorem}

\theoremstyle{definition}

\newtheorem{remark}[theorem]{Remark}

\newtheorem{definition}[theorem]{Definition}

\def\C{\mathbb C}

\def\Z{\mathbb Z}
\def\T{\mathbb T}
\def\Q{\mathbb Q}

\newcommand{\R}{\mathbb{R}}
\newcommand{\N}{\mathbb{N}}

\newcommand{\cc}{\mathbb{\mathcal C}}

\newcommand{\ca}{\mathcal A}

\newcommand{\cu}{\mathcal U}

\newcommand{\cm}{\mathcal M}

\newcommand{\cl}{\mathcal{L}}

\newcommand{\ct}{\mathcal{T}}

\newcommand{\id}{\textnormal{Id}\,}

\newcommand{\nbd}{neighbourhood }

\newcommand{\area}{\textnormal{Area}\,}

\newcommand{\priv}{\backslash}

\newcommand{\eps}{\varepsilon}
\renewcommand{\phi}{\varphi}

\newcommand{\st}{\textnormal{st}}

\newcommand{\supp}{\text{Supp}\,}

\newcommand{\wdt}[1]{\widetilde{#1}}
\newcommand{\cqfd}{\hfill $\square$ \vspace{0.1cm}\\ }
\newcommand{\sbull}{{\tiny $\bullet$ }}
\newcommand{\ds}{\displaystyle}
\newcommand{\im}{\textnormal{Im}\,}
\newcommand{\re}{\textnormal{Re}\,}

\newcommand{\nf}[2]{{\nicefrac{#1}{#2}}}

\newcommand{\Om}{\Omega}

\newcommand{\cz}{\textnormal{CZ}}

\newcommand{\ind}{\textnormal{index}\,}

\renewcommand{\top}{\textnormal{top}}
\newcommand{\low}{\textnormal{low}}
\newcommand{\lk}{\textnormal{lk}\,}

\def\eps{\varepsilon}

\newcommand{\CC}{{\mathbb C}}

\newcommand{\RR}{{\mathbb R}}
\newcommand{\ZZ}{{\mathbb Z}}
\renewcommand{\P}{{\mathbb P}}

\begin{document}

\title{Squeezing Lagrangian tori in dimension $4$}

\author{R. Hind \thanks{R.H. is partially supported by Simons Foundation grant no. 317510.}  \and E. Opshtein}

\date{\today}

\maketitle

\begin{abstract} Let $\C^2$ be the standard symplectic vector space and $L(a,b) \subset \C^2$ be the product Lagrangian torus, that is, a product of two circles of area $a$ and $b$ in $\C$. We give a complete answer to the question of knowing the minimal ball into which these Lagrangians may be squeezed. The result is that there is full rigidity when $b\leq 2a$, which disappears almost completely when $b>2a$.
\end{abstract}

\begin{section}{Introduction.}

In this paper we investigate the extent to which product Lagrangian tori can be `squeezed' by Hamiltonian diffeomorphisms. To be precise, we determine when such a torus can be mapped into a ball or a polydisk.
To fix notation, we work in the vector space $\RR^4\approx \C^2$ equipped with its standard symplectic form $\omega= \sum_{i=1}^2 dx_i \wedge dy_i$. The Lagrangian product tori are defined by
$$L(a,b) =\{\pi |z_1|^2=a, \pi |z_2|^2=b\} .$$
The open ball of capacity $R$ is given by $$B(R)=\{\pi(|z_1|^2+|z_2|^2)<R\},$$ and our polydisks are defined by $$P(a,b) = \{\pi |z_1|^2 < a, \pi |z_2|^2 < b\} .$$ Hence $L(a,b)$ is the singular part of the boundary of $P(a,b)$.  Up to renormalizing the symplectic form, it is  enough to study the special case of squeezing Lagrangian tori $L(1,x)$, $x\geq 1$. The current paper contains the first results about non-monotone tori, but the monotone case is already known: $L(1,1)$ (or any torus with monotonicity constant $2$) cannot be squeezed into a ball of size $2$, see \cite{ciemon}. Our main result is the following:

\begin{thm}\label{thm:main} For $x \ge 1$, there exists a Hamiltonian diffeomorphism of $\C^2$ that takes $L(1,x)$ into $B(R)$ if and only if $R > \min(1+x,3)$.
\end{thm}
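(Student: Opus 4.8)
\medskip

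\noindent\emph{Overview.} Theorem~\ref{thm:main} packages a flexibility (``if'') statement together with a rigidity (``only if'') statement, and I would treat the two separately. For the flexibility part, recall that $\mu(z)=(\pi|z_1|^2,\pi|z_2|^2)$ is the moment map of the standard $T^2$--action, that $B(R)=\mu^{-1}(\Delta_R)$ with $\Delta_R=\{u_1,u_2\ge 0,\ u_1+u_2<R\}$, and that $L(1,x)=\mu^{-1}(1,x)$. Thus, with no Hamiltonian motion at all, $L(1,x)\subset B(R)$ as soon as $(1,x)\in\Delta_R$, i.e.\ as soon as $R>1+x$. This already proves the ``if'' direction for all $R>1+x$, in particular it covers the whole range $R>\min(1+x,3)$ when $x\le 2$. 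The only remaining case of the ``if'' direction is therefore $x>2$ and $3<R\le 1+x$: here one must genuinely squeeze the torus past the moment polytope.

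\medskip

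\noindent\emph{The non-trivial squeezing ($x>2$).} I would realise this by an explicit Hamiltonian diffeomorphism supported near $L(1,x)$, built by a folding/wrapping construction. Work in a thin neighbourhood $U=\{1-\delta<\pi|z_1|^2<1+\delta\}$ of $L(1,x)$, symplectomorphic to (a thin annulus)$\times\C_{z_2}$; note that $B(R)\cap U$ contains $z_2$--disks of area just under $R-1>2$ where $\pi|z_1|^2\approx 1$, and $z_2$--disks of area up to nearly $R$ where $\pi|z_1|^2$ is allowed to drop toward $0$. The construction replaces the round circle $C_2\subset\C_{z_2}$ of area $x$ by a long ``accordion'' (spiral) curve that still accumulates $z_2$--area $x$, but whose successive turns are lifted off one another using the annulus ($z_1$) direction — shifting the outer turns into the region $\pi|z_1|^2\to 0$ where more $z_2$--room is available — so that the torus remains embedded and Lagrangian and stays inside $B(R)$; the Liouville class of course stays $(1,x)$. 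One finds that the number of turns grows linearly in $x$ while the ambient ball size can be kept at any $R>3$. The real work is to write this as a finite composition of elementary Hamiltonian moves and to check at each stage that the image is embedded, Lagrangian and inside $B(R)$ — in particular that the constant can be pushed all the way down to $3+\varepsilon$ and not merely to something larger; the isotopy is then extended to all of $\C^2$ in the usual way.

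\medskip

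\noindent\emph{The obstruction.} Suppose $\phi\in\Ham(\C^2)$ with $L:=\phi(L(1,x))\subset B(R)$; one must show $R>\min(1+x,3)$. Being the Hamiltonian image of $L(1,x)$, $L$ is a Lagrangian torus whose period (Liouville) homomorphism $H_1(L;\Z)\to\R$ equals, in a suitable basis $e_1,e_2$, the class $(1,x)$, and whose Maslov class reduces to the standard one. For $R'>R$ one has $B(R)\subset\C\P^2(R')$ (the complement of a line, the line having area $R'$), so $L\subset\C\P^2(R')$. I would then run the holomorphic--curve machinery of \cite{ciemon} together with the neck-stretching techniques of the authors' earlier work: for generic tame $J$ (standard near the line at infinity), apply the relevant degeneration to a line through a point of $L$. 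The resulting holomorphic building contains honest $J$--holomorphic disks with boundary on $L$; their areas are the corresponding values of $a+bx$, the admissible classes are constrained by the fixed Maslov reduction, and the boundary classes occurring in the degeneration of a line must together span enough of $H_1(L)$. Combining positivity of area with this bookkeeping forces the total area accounted for to be at least $\min(1+x,3)$, hence $R'\ge\min(1+x,3)$; since $L$ is compact in the open ball and $R'$ may be taken arbitrarily close to $R$, we get $R>\min(1+x,3)$. (For $x\le 2$ the same analysis yields the bound $1+x$, recovering and extending the monotone case of \cite{ciemon}.)

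\medskip

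\noindent\emph{Where the difficulty lies.} The crux is the sharp constant $3$ in the range $x>2$. A naive ``minimal disk'' estimate in the spirit of Cieliebak--Mohnke is not enough: for many $x>2$ the Liouville class $(1,x)$ already admits Maslov--$2$ classes of area well below $1$, so a single cheap holomorphic disk carries no contradiction. One must instead control \emph{several} disk components of the building at once, together with the combinatorics of how their boundary classes add up around the degenerate line, to see that the cheap disks cannot all occur simultaneously. Turning the folding construction into a rigorously verifiable isotopy for all $x$ at once — with the ball radius kept down to $3+\varepsilon$ — is the other place demanding care.
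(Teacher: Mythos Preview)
Your outline misses the mechanism that produces the sharp constant $3$ on the obstruction side, and on the construction side it skips the step that identifies the image torus.

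\medskip

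\textbf{Obstruction.} You propose to stretch the neck around $L$ and degenerate a \emph{line} in $\C\P^2(R')$. A degree--$1$ curve has total area $R'$, and after stretching it breaks into pieces whose areas are integer combinations $a+bx$; no bookkeeping on a single line forces $R'\ge 3$ (compare: in the ellipsoid setting a line only gives $R>2$). The paper's argument is structurally different. One embeds a thin ellipsoid $\eps E(1,S)$ with $S>3d-1$ inside a Weinstein neighbourhood $U$ of $L$ and uses the Hind--Kerman finite-energy planes of \emph{degree $d$} in $\C\P^2(R)\setminus\eps E(1,S)$ asymptotic to $\gamma_1^{3d-1}$, for $d$ arbitrarily large. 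Stretching along $\partial U$ and analysing indices (Lemmas~\ref{posout} and \ref{posin}) forces exactly $3d$ outer components $F_1,\dots,F_{3d}$, each of index $1$. When $x\ge 2$, area positivity pins all the $k_i$ to $0$, and one component then has degree $d$ and area $Rd-(3d-1)>0$, so $R>3-\tfrac1d$. Letting $d\to\infty$ is what gives $R\ge 3$; your scheme has no parameter to send to infinity.

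For $1\le x<2$ there is a second ingredient you do not mention: one must exclude finite-energy planes in $\C^2\setminus U$ of index at least the number of ends and area $<1$ (Lemma~\ref{le:inC2}). This uses positivity of intersection with $\{z_2=0\}$ for a specific $J$, a reduction to $x\in\Q$, and a compactness/cobordism argument to transfer the statement to generic $J$. Your remark that ``Maslov--$2$ classes of area well below $1$'' exist is precisely why this lemma is needed; without it the area inequalities do not close.

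\medskip

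\textbf{Construction.} Your folding picture is in the right spirit, but the paper does \emph{not} realise the squeezing as a composition of elementary Hamiltonian moves on $L(1,x)$. Instead it writes down the target torus directly (an $n$-fold spiral $\gamma$ times a circle, with the self-intersections resolved by a Hamiltonian $K(z,w)=f(z)G(w)$), checks its area and Maslov classes, and then must still prove that this torus is Hamiltonian isotopic to $L(1,x)$ rather than to a hypothetical exotic torus with the same invariants. That last step (Proposition~\ref{prop:isotopy}) uses the solid torus $\Sigma$ bounded by $L$, specific linking numbers with two complex lines $\{z=a\}$, $\{w=b\}$ engineered in Proposition~\ref{prop:existence}, and the Lagrangian unknottedness theorem of Dimitroglou Rizell--Goodman--Ivrii to produce a Lagrangian isotopy in $\C^2\setminus(\{z=a\}\cup\{w=b\})$ which is then corrected to a Hamiltonian one via dilations and line-inflations. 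Your sketch contains no substitute for this identification step.
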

In other terms, the Lagrangian torus $L(1,x)$, that belongs to the boundary of the ball $B(1+x)$, cannot be squeezed into a smaller ball when $x\leq 2$, while it can be squeezed into a ball of size $3+\eps$ if and only if $\eps>0$ when $x\geq 2$. Our result when $x\geq 2$ holds in fact in a slightly more general setting that we review now. For a Lagrangian $L\subset \C^2$, there are two homomorphisms $\Omega, \mu : H_1(L,\ZZ)  \to \RR$ describing how $L$ is embedded in $\C^2$. The first homomorphism is the area class and is defined by $\Omega(e) = [\lambda](e)$ where $\lambda$ is a Liouville form, that is, a primitive of $\omega$. Equivalently, $\Omega(e) = \int _D u^* \omega$ where $D$ is a disk and $u:(D,\partial D) \to (\C^2,L)$ verifies $u_*[\partial D]=e$. The second homomorphism is the Maslov class. If $u:S^1 \to L$ with $u_*[S^1]=e$ then $\mu(e)$ is the Maslov class of the loop of Lagrangian subspaces $T_{u(t)}L \subset \C^2$.

\begin{thm}\label{thm:mainobs} Suppose $L \subset B(R)$ is a Lagrangian torus and $e_1, e_2$ is an integral basis of $H_1(L,\ZZ)$ satisfying
\begin{enumerate}
\item $\Omega(e_1)=1$, $\Omega(e_2) \ge 2$,
\item $\mu(e_1) = \mu(e_2) = 2$.
\end{enumerate}
Then $R >3$.
\end{thm}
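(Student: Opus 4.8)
The plan is to argue by contradiction, using the neck‑stretching technique behind \cite{ciemon}. Suppose $L\subset B(R)$ with $R\le 3$; since $B(R)$ is open and $L$ compact we may assume $R<3$. Compactify $B(R)$ as $\mathbb{CP}^2$ with the Fubini--Study form scaled so that the line at infinity $H$ has area $R$, so that $L\subset\mathbb{CP}^2(R)\setminus H$. Choose a Weinstein neighbourhood $\mathcal N\cong D^*T^2$ of $L$. The Reeb flow of a flat metric on $T^2$ is completely periodic, with simple closed orbits $\gamma_\beta$ indexed by primitive classes $\beta\in H_1(L;\ZZ)\cong\ZZ^2$; since the restriction to $L$ of a Liouville form of $\C^2$ represents the area class $\Omega$, one may take $\mathcal N$ thin in the direction of $\ker\Omega$ so that the action of $\gamma_\beta$ is as close as we wish to $|\Omega(\beta)|$ — in particular the orbits in classes $e_1$, $e_2$, $e_1+e_2$ have actions near $1$, $\Omega(e_2)$, $1+\Omega(e_2)\ge 3$. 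Fix a tame $J$ that is integrable near $H$ (so $H$ is $J$‑holomorphic and all $J$‑curves meet $H$ non‑negatively) and generic elsewhere, including on $\mathcal N$.

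Next, fix two generic points $p,q\in L$ and let $C$ be the unique $J$‑holomorphic line through them (unique and regular for every tame $J$ on $\mathbb{CP}^2$ by positivity of intersection); then $\operatorname{Area}(C)=R$ and $C\cdot H=1$. Stretch the neck along $\partial\mathcal N$. By SFT compactness $C$ degenerates to a holomorphic building: a top level, which is a genus‑zero punctured curve in the completion of $\mathbb{CP}^2(R)\setminus\mathcal N$ carrying the class $[H]$ and the two point constraints — hence of strictly positive $\om$‑area, since it still meets $H$; possibly symplectization levels over $S^*T^2$; and a bottom level in $T^*T^2$. The $\om$‑areas of all pieces sum to $R$, and by Stokes the contribution of the bottom and intermediate levels equals the total action of the Reeb orbits they are asymptotic to. Since $T^*T^2$ carries no holomorphic planes (a plane would null‑homotope a non‑trivial orbit), the bottom level is a union of cylinders and curves of higher complexity, whose asymptotic orbits have homology classes in $H_1(L)$ constrained by the requirement that the building close up to the class of $C$ and pass through $p$ and $q$.

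The crux is to show that this forces the total action of the limiting Reeb orbits to be at least $3$. Here one uses: (i) the classification of low‑area finite‑energy punctured curves in $T^*T^2$, which pins down the homology classes of their asymptotic orbits; (ii) the Fredholm/Conley--Zehnder index bookkeeping — since $C$ is rigid after the two point constraints and $c_1\cdot[H]=3$, the total index of the building is fixed, which together with (i) limits the orbit data that can occur; and (iii) the conditions $\mu(e_1)=\mu(e_2)=2$, which enter exactly here by fixing the Conley--Zehnder indices of $\gamma_{e_1}$ and $\gamma_{e_2}$, ruling out the cheap configurations (e.g.\ asymptotics only in the $e_1$‑direction, or multiply covered $\gamma_{e_1}$) and forcing the orbit classes to sum, in $H_1(L)$, to a class $\beta$ with $\Omega(\beta)\ge 3$ — the threshold $1+\min(\Omega(e_2),2)=3$ being precisely the one matching the positive direction of Theorem~\ref{thm:main}. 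Granting this, the bottom and intermediate levels carry $\om$‑area $\ge 3-\eps'$ (with $\eps'\to 0$ as the neighbourhood is made thinner), while the top level carries strictly positive $\om$‑area, so $R=\operatorname{Area}(C)>3-\eps'$; in the limit, with the strict positivity of the top piece, $R>3$, contradicting $R<3$.

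The main obstacle is step (iii) together with the building classification in (i)--(ii): one must control the SFT limit sharply enough — ruling out limits with many symplectization levels, multiply covered components, or asymptotics confined to a single $H_1$‑direction — and then extract the exact numerical threshold $3$. Secondary technical points are the regularity of the moduli spaces involved (of the line, and of the curves in $T^*T^2$ and in the symplectization), the precise normalisation of the contact form encoding both $\Omega$ and $\mu$, and the bookkeeping needed to upgrade the a priori bound $R\ge 3$ to the strict inequality $R>3$.
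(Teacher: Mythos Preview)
Your approach is genuinely different from the paper's, and the key step you call ``(iii)'' is not a technical detail but the entire content of the theorem; as written it is asserted rather than proved.

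\medskip

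\textbf{What the paper actually does.} The paper does \emph{not} stretch a line. It embeds a very eccentric ellipsoid $\epsilon E(1,S)$ (with $S>3d-1$) inside a Weinstein neighbourhood $U$ of $L$ and uses the Hind--Kerman finite-energy planes of degree $d$ in $\mathbb{CP}^2(R)\setminus \epsilon E(1,S)$ asymptotic to $\gamma_1^{3d-1}$. Stretching along $\partial U$, the limit is split into an inner piece $F_0$ (containing the negative end on $\gamma_1^{3d-1}$) and outer pieces $F_1,\dots,F_T$; delicate index arguments (Lemmas~\ref{posout} and~\ref{posin}) force $T=3d$ and $\mathrm{index}(F_i)=1$. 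Under $R<3$ and $\Omega(e_2)\ge 2$, positivity of area of each $F_i$ forces all second coordinates of the asymptotic classes to vanish, and then an area count on $F_0$ pins down the configuration: $3d-1$ planes of degree $0$ and one component of degree $d$, whose positive area gives $R>3-\tfrac1d$. Letting $d\to\infty$ yields the bound. The large degree and the extra ellipsoidal end are precisely what generate enough rigidity to reach $3$.

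\medskip

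\textbf{Where your sketch breaks down.} First, your normalisation of the contact form is problematic: on the boundary of a Weinstein neighbourhood with the canonical $p\,dq$ the action of $\gamma_\beta$ scales with the size of the neighbourhood, not with $|\Omega(\beta)|$; to get actions near $|\Omega(\beta)|$ you would need a neighbourhood of a specific large size in $T^*L$, and you give no argument that such a neighbourhood embeds in $B(R)$. Second, you place the point constraints $p,q$ on $L$ and then assert they lie in the \emph{top} level of the building; but $L$ is the zero section of the inside piece, so after stretching those constraints land in $T^*T^2$, not in $\mathbb{CP}^2\setminus\mathcal N$. This changes the index bookkeeping entirely. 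Third --- and decisively --- you explicitly say ``Granting this'' and list step~(iii) as ``the main obstacle'': showing that the asymptotic classes are forced to have $\sum\Omega(\beta_i)\ge 3$. For a degree-$1$ line the total Maslov budget is $6$, and you must rule out limits with several top-level components of various degrees, multiply covered pieces, and orbit classes with negative $e_2$-coefficient. None of this is done. In the paper this kind of exclusion is exactly what the combination of the inner ellipsoid, the choice $S>3d-1$, and Lemmas~\ref{posout}--\ref{posin} accomplishes; without an analogue, your outline is a plan, not a proof.
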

Up to our knowledge, although any non-monotone Lagrangian torus in $\C^2$ may be conjectured to be Hamiltonian isotopic to  a product one, there is no proof available at this time. In \cite{gir}, Dimitroglou Rizell-Goodman-Ivrii prove a weaker unknottedness result, namely that any two Lagrangian tori are Lagrangian isotopic. The conjecture would  imply that Theorem \ref{thm:mainobs} is a consequence of Theorem \ref{thm:main}.

We also get a similar result for  Lagrangian tori inside polydiscs.

\begin{thm}\label{thm:polyemb} For $x \ge 1$ and $a \le b$ there exists a Hamiltonian diffeomorphism of $\C^2$  that takes  $L(1,x)$ into
$P(a,b)$ if and only if either $a>2$ or both $a>1$ and $b>x$.
\end{thm}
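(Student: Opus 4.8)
The plan is to handle the two implications separately after reductions; I expect the constructive ``if'' direction to be the main obstacle.

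\textbf{Reductions.} Polydisks are monotone for inclusion, $P(a,b)$ and $P(b,a)$ are symplectomorphic, a Hamiltonian diffeomorphism preserves the area and Maslov classes $\Omega,\mu$ of a Lagrangian, and $P(a,b)$ is open (so a compact Lagrangian inside it already lies in some $P(a-\delta,b-\delta)$). The ``if'' direction then splits into the trivial case $a>1$, $b>x$, where $L(1,x)\subset P(a,b)$ directly (this also covers $a>2$ together with $b>x$), and the single nontrivial case $2<a\le b\le x$, which by monotonicity follows once one shows: \emph{for every $x\ge1$ and $\eps>0$ there is a Hamiltonian diffeomorphism of $\C^2$ taking $L(1,x)$ into $P(2+\eps,2+\eps)$}. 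The ``only if'' direction reduces, for a Lagrangian torus $L'\subset P(a,b)$ ($a\le b$) carrying the $\Omega$- and $\mu$-data of $L(1,x)$, to showing (I) $a>1$, and (II) $b>x$ whenever $a\le 2$. (When $1\le x\le 2$ these together say exactly that $L(1,x)$ squeezes into $P(a,b)$ iff it already lies inside it --- the full-rigidity regime.)

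\textbf{The construction (the hard ``if'').} I would build the diffeomorphism by an explicit folding/wrapping isotopy, in the spirit of the construction underlying Theorem~\ref{thm:main} but sharper --- and this is where I expect the real difficulty to lie. Note first that this squeezing is genuinely new: $P(2+\eps,2+\eps)$ has Gromov width $2+\eps<3$, so it contains no embedded $B(3+\eps)$ and the statement is not formally implied by Theorem~\ref{thm:main}. The idea is to realize $L(1,x)$ as (a torus Hamiltonian isotopic to) the Lagrangian suspension, over the ``long'' second-factor circle, of a Hamiltonian loop of $\C_{z_1}$ that winds the disk $D(1)$ around inside $D(2+\eps)$ of order $x$ times: the windings pump the area class $\Omega(e_2)$ up to $x$ through the term $\oint\lambda(X_{H_t})\,dt$ in the suspension's period integral, while $\pi|z_1|^2<2+\eps$ is automatic and $\pi|z_2|^2<2+\eps$ is arranged by a careful, bounded choice of generating Hamiltonian. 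Since the number of windings is unbounded, this terminates inside $P(2+\eps,2+\eps)$ no matter how large $x$ is; that the threshold $2$ (rather than the ball's $3$) is attained reflects the extra efficiency of folding inside a polydisk. Making this precise --- keeping the suspension embedded and controlling both radii simultaneously rather than one at a time --- should be the bulk of the work, parallel to the corresponding step in the proof of Theorem~\ref{thm:main}.

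\textbf{The obstruction (the ``only if'').} For (I): compactify the short factor $D(a)\hookrightarrow\P^1(a+\eps)$ with $\eps$ arbitrarily small, view $L'$ as a Lagrangian torus in $\P^1(a+\eps)\times\C$, and run the standard neck-stretching argument with the ruling by the sphere class of self-intersection $0$ and Chern number $2$ (as in the proof of Theorem~\ref{thm:mainobs}): this produces a Maslov-$2$ holomorphic disk with boundary on $L'$ of symplectic area at most $a$. The Maslov-$2$ classes of $L(1,x)$ have $\Omega$-areas exactly $\{x-m(x-1):m\in\ZZ\}$, whose least positive value equals $1$ whenever $x\ge 2$ (and also when $x=1$); hence $a\ge1$, and then $a>1$ because $P(a,b)$ is open. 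For $1<x<2$ that least positive value is $2-x<1$ and the bound is too weak, so here one must use a finer holomorphic-curve argument, of the kind behind the full-rigidity regime $x\le 2$ of Theorem~\ref{thm:main} (e.g. one retaining the whole moduli space of such disks rather than a single member). For (II): this is the polydisk analogue of Theorem~\ref{thm:mainobs}, obtained by rerunning that argument with the two \emph{product} rulings of $\overline{P(a,b)}\hookrightarrow\P^1(a+\eps)\times\P^1(b+\eps)$ in place of the ball's foliation; degenerating a sphere in the ruling of area $b+\eps$ yields a broken configuration of Maslov-$2$ disks on $L'$ of total area at most $b+\eps$, and the crux is that when $a\le 2$ the complementary ruling (of area at most $2+\eps$) prevents this configuration from being short, forcing a component of $\Omega$-area $\ge x$ and hence $b\ge x$, then $b>x$. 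It is not a coincidence that $2$ is the threshold both in (II) and in the construction: the winding becomes possible exactly when the first factor exceeds $2$, and the obstruction detects precisely this mechanism. I expect the bookkeeping of the broken configurations --- showing that the ``long'' class cannot be represented by a short disk once $a\le 2$ --- to be the technical core of this half, mirroring Theorem~\ref{thm:mainobs}.
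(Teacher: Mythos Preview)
Your overall decomposition (trivial inclusion, one nontrivial construction $L(1,x)\hookrightarrow P(2+\eps,2+\eps)$, and two obstructions (I) $a>1$ and (II) $b>x$ when $a\le 2$) matches the paper's exactly. But both the construction and the obstruction deviate from the paper in ways that matter.

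\textbf{Obstruction (I).} Your neck-stretching argument for $a>1$ is more complicated than needed and, as you note yourself, leaves a genuine gap when $1<x<2$. The paper bypasses all of this in one line: $L(1,x)$ has displacement energy $1$ (Chekanov--Schlenk), while anything in $P(a,b)$ is displaceable with energy $a$; hence $a\ge 1$, and $a>1$ by openness. No holomorphic curves are needed here.

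\textbf{Obstruction (II).} Here there is a real gap. Degenerating a single \emph{ruling} sphere (bidegree $(0,1)$, area $b+\eps$) will not yield the sharp threshold $a=2$: the breaking of such a low-degree curve does not see $a$ at all, and your sentence ``the complementary ruling\ldots prevents this configuration from being short'' hides exactly the missing mechanism. The paper instead compactifies $P(a,b)$ to $S^2(a)\times S^2(b)$ and uses the Hind--Kerman finite-energy planes of bidegree $(d,1)$ asymptotic to $\gamma_1^{2d+1}$ for arbitrarily large $d$, then stretches the neck along the boundary of a Weinstein neighbourhood of $L$. Exactly as in the ball case, one shows $\mathrm{index}(F_0)=0$, $\mathrm{index}(F_i)=1$, and is forced into a single component of bidegree $(d,1)$ with area $(a-2)d+b-1>0$. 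Letting $d\to\infty$ is what produces $a\ge 2$; this limiting argument simply has no analogue if you only use curves of bounded degree. The case $x\le 2$ also requires Lemma~\ref{le:inC2} (no low-area planes in $\C^2\setminus U$), just as in Section~\ref{sec:proof}.

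\textbf{Construction.} Your suspension idea is conceptually close, but the paper does \emph{not} start from $L(1,x)$ and isotope it. Instead it writes down directly an embedded Lagrangian torus $L\subset P(2\lambda,2\lambda)\cap B(3\lambda)$ with the correct area and Maslov classes (a product of a circle with an $n$-fold spiraling curve in a square, desingularised by a Hamiltonian perturbation), and then uses the unknottedness theorem of Dimitroglou~Rizell--Goodman--Ivrii to conclude that this $L$ is Hamiltonian isotopic to $L(1,x)$ in $\C^2$. That last step is an essential external input you do not mention; without it (or a replacement), one would have produced only a possibly exotic torus inside $P(2+\eps,2+\eps)$, not the image of $L(1,x)$.
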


In particular we see that embeddings of Lagrangian tori into balls or polydisks do not necessarily extend to the corresponding polydisks, where there is a volume obstruction. Even more, there are {\it symplectic} obstructions to embedding a polydisk into a ball which do not obstruct squeezing its Lagrangian "singular boundary" (see \cite{hutchings}, \cite{nelsonc} and compare to Theorem \ref{thm:main}). Notice that in \cite{hl}, the obstruction for embedding a bidisk $P(1,2)$ into a ball was precisely investigated {\it via} studying the singular boundary $L(1,2)$. The present paper shows in particular that this approach is not relevant for studying squeezing of bidisks $P(1,x)$ for $x>2$. Nevertheless our approach can be applied to solve the stabilized polydisk embedding problem $P(1,x) \times \C^n \hookrightarrow B(R) \times \C^n$, see \cite{hindstab}.

\paragraph{Outline of the paper.} Theorems \ref{thm:main} and \ref{thm:polyemb} have two facets. On the one hand, they assert some obstructions for squeezing a Lagrangian torus into a small ball or polydisc. On the other hand, they claim the existence of some embeddings, that we need to construct explicitly. For the obstruction part, our approach goes by studying the behaviour of some curves introduced in \cite{hindker} under a convenient neck-stretching process. We describe the general strategy and set notation in section \ref{sec:gefr}. We then gather in section \ref{sec:lemma} several lemma on properties of the curves in the holomorphic buildings appearing after our neck-stretching, in particular about their indices. In section \ref{sec:proof}, we prove Theorem \ref{thm:mainobs} and the obstruction part of Theorem \ref{thm:main}. Embeddings into balls are actually technically more difficult to study than into polydiscs. In section \ref{sec:polydiscs},  we outline the adjustments needed to deal with embeddings into polydisks.
Finally, we deal with the constructive part of Theorems \ref{thm:main} and \ref{thm:polyemb}  at the same time, by proving the following:
\begin{thm}\label{thm:constr}
\label{embtori} If $x>1$, for any $\lambda>1$,  there exists a Hamiltonian diffeomorphism of $\C^2$ that takes the product torus $L(1,x)$ into $B(3\lambda) \cap P(2\lambda, 2\lambda)$.
\end{thm}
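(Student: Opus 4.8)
The plan is to construct the squeezing Hamiltonian diffeomorphism explicitly as a composition of two ``elementary'' moves, each supported in a region where one of the two complex coordinates dominates. Throughout we fix $\lambda>1$ and work with $L(1,x)$ for $x>1$; since we have $\lambda$ of room to spare, all inequalities below only need to hold up to a factor $\lambda$, which gives us the freedom to smooth corners and to absorb error terms.

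The first step is a normalization. Observe that it suffices to treat $1<x\le 2$: indeed, if $x>2$, a standard area-preserving isotopy of the second factor $\C$ (pushing the circle $\{\pi|z_2|^2=x\}$ across a region disjoint from the first circle) replaces $L(1,x)$ by a Lagrangian torus that is Hamiltonian isotopic to it and already lies in a region of the form $P(1+\eps,2+\eps)\times\C$ intersected with a ball of capacity $3+\eps$; more honestly, the case $x\ge 2$ is where the bound $3\lambda$ is the binding constraint, and once $L(1,2)$ is squeezed into $B(3\lambda)\cap P(2\lambda,2\lambda)$, any $L(1,x)$ with $x\ge 2$ can be brought into $L(1,2)$'s image because the extra area in the $z_2$-circle can be traded against the available slack. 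So the heart of the matter is to push $L(1,x)$, $1<x\le 2$, which naturally sits on $\partial B(1+x)\subset\partial B(3)$, into the strictly smaller set $B(3\lambda)\cap P(2\lambda,2\lambda)$ --- i.e., to simultaneously gain a little room in the ball direction and confine both $|z_j|^2$.

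The key construction is an explicit ``lifting'' in the spirit of symplectic folding / the Lalonde--McDuff type wrapping maps. Consider the first circle factor: the disk $\{\pi|z_1|^2\le 1\}$ has area $1$, and in the $z_1$-plane we have a full annulus of area up to $2\lambda-1$ available before hitting $\pi|z_1|^2=2\lambda$. The idea is to use a Hamiltonian supported near the $z_1$-axis that wraps the Clifford-type torus $L(1,x)$ around so that in the $z_1$-coordinate it occupies an annular region $\{a\le\pi|z_1|^2\le a+\eps\}$ with $a$ small (so $\pi|z_1|^2<2\lambda$ is automatic) while trading the ``lost'' symplectic area into a shift that brings the $z_2$-circle inward to radius with $\pi|z_2|^2<2\lambda$ as well. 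One does this factor-by-factor: first a Hamiltonian isotopy $\phi_1$, a small perturbation of a map of the form $(z_1,z_2)\mapsto(f(z_1),z_2)$ in suitable polar-type coordinates, arranging $L(1,x)\subset P(2\lambda,\,\cdot\,)$ while keeping the image inside $B(3\lambda)$; then the genuinely two-dimensional move, a shear-type Hamiltonian mixing the two factors, that simultaneously realizes $\pi|z_2|^2<2\lambda$ and $\pi(|z_1|^2+|z_2|^2)<3\lambda$. The point of having the $\lambda$-slack in \emph{both} the $B(3\lambda)$ and the $P(2\lambda,2\lambda)$ constraints is that it leaves enough maneuvering room that the two confinements do not conflict.

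The main obstacle --- and the only genuinely delicate point --- is verifying that the second, mixing move can be performed so that it respects the ball constraint $B(3\lambda)$ throughout the isotopy, not merely at the endpoint: Hamiltonian isotopies need not be monotone in $|z_1|^2+|z_2|^2$, so one must track the full path. I expect the cleanest way around this is to build $\phi$ as a ``staircase'' composition of moves each localized in a coordinate annulus where one of the $|z_j|^2$ is essentially constant, so that the ball-size can be controlled annulus-by-annulus; in each such piece the estimate reduces to a one-variable area computation together with the elementary inequality that a circle of area $1$ together with a circle of area $x\le 2$ can be arranged, after wrapping, to have $\pi|z_1|^2+\pi|z_2|^2$ bounded by any number exceeding $3$. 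Checking the compatibility of these local moves --- i.e.\ that the supports can be made disjoint or nested appropriately and that the accumulated displacement still lands $L(1,x)$ in the asserted intersection --- is where the bulk of the (routine but fiddly) work lies, and is exactly the part that, per the outline, is ``technically more difficult'' for balls than for polydiscs; for polydiscs the ball constraint is simply dropped and the same construction applies verbatim.
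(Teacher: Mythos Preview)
Your proposal has a genuine gap: the case analysis is inverted, and the actual content of the theorem is left unaddressed.

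For $1<x\le 2$ there is nothing to prove. The torus $L(1,x)$ has $\pi|z_1|^2=1<2\lambda$, $\pi|z_2|^2=x\le 2<2\lambda$, and $\pi(|z_1|^2+|z_2|^2)=1+x\le 3<3\lambda$, so $L(1,x)$ already lies in $B(3\lambda)\cap P(2\lambda,2\lambda)$ and the identity map suffices. (Your assertion that $\partial B(1+x)\subset\partial B(3)$ is also false unless $x=2$, and $B(3\lambda)$ is \emph{larger} than $B(3)$, not smaller.)

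The non-trivial content is precisely $x>2$, and here your reduction fails. An ``area-preserving isotopy of the second factor'' takes the circle of area $x$ to another \emph{embedded} closed curve in $\C$ bounding area $x$; by the Jordan curve theorem such a curve cannot sit inside a disk of area less than $x$, so this move can never achieve $\pi|z_2|^2<2\lambda$ when $x>2$ and $\lambda$ is close to $1$. More generally, no Hamiltonian of product type (or small perturbation thereof) can work, since the area class is a Hamiltonian invariant and the second circle must somehow be ``spread'' across both coordinates. Your folding/wrapping sketch never specifies a mechanism for this; the passages about ``trading lost symplectic area into a shift'' and ``staircase compositions'' are not a construction.

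The paper's approach is entirely different and does \emph{not} attempt to write down the Hamiltonian isotopy directly. Instead it (i) constructs, by hand, a Lagrangian torus $L$ inside any neighbourhood of $B(3)\cap P(2,2)$ having the correct Maslov and area classes --- the key device is an immersed curve $\gamma$ in the $z$-plane approximating an $n$-fold cover of the boundary of a square of area $\approx 1$, so that $\gamma\times T$ has $e_1$-area $\approx n$, and then resolves the self-intersections by a Hamiltonian that displaces $T$ in the $w$-factor; (ii) arranges $L$ to bound a solid torus with controlled characteristic foliation and linking numbers; and (iii) invokes the Lagrangian unknottedness theorem of Dimitroglou Rizell--Goodman--Ivrii to obtain a \emph{Lagrangian} isotopy from $L$ to $L(1,x)$, which is then upgraded to a Hamiltonian isotopy by conformal and translational moves in $T^*\T^2\supset\C^*\times\C^*$. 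Step (iii) is essential: the paper explicitly notes that it is not known whether every non-monotone Lagrangian torus in $\C^2$ is Hamiltonian isotopic to a product one, so one cannot simply read off the conclusion from the invariants of $L$.
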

This is described in section \ref{sec:construction}. Our approach is to explicitly write down the image of the Lagrangian torus, which can readily be seen to have Maslov and area classes corrsponding to those of $L(1,x)$. We will then rely on \cite{gir} to show that what we have described is in fact the image of a product torus, rather than a possible exotic nonmonotone torus in $\C^2$ (which conjecturally do not exist).

\end{section}

\begin{section}{Geometric framework}\label{sec:gefr}
The proof of Theorem \ref{thm:main} is based on neck-stretching arguments in the following setting. Let $L\subset B^4(R)$ be an embedded Lagrangian torus in the ball of capacity $R$. We can compactify $B(R)$ to a projective plane $\C\P^2(R)$ with lines of area $R$ and will denote by $S_{\infty}$ the line at infinity. The cotangent bundle of $\T^2$ can be symplectically identified with  $\R^4 \slash \Z^2$ where $\Z^2$ acts by translations in the $(x_1,x_2)$-plane. We fix a very large integer $d$ and an irrational number $S>3d-1$. Then, for $\eps$ small enough, the \nbd
$$
V_{\eps,S}:=\{|y_1|\leq \nf \eps 2,|y_2|\leq \nf{\eps S}2\}
$$
of the zero section symplectically embeds into $B(R)$ as a Weinstein \nbd of $L$, denoted by $V$. Now $V_{\eps,S}$ contains a symplectic bidisk $P(\eps, \eps S)$, which by inclusion contains an ellipsoid $\eps E(1,S)$. By definition $\eps E(1,S) = E(\eps, \eps S)$ and the ellipsoids are defined by
$$E(a,b)=\{\pi (\frac{|z_1|^2}{a}+\frac{|z_2|^2}{b})<1\}.$$
Putting everything together, we therefore have inclusions
$$
\eps E(1,S)\subset V \subset B(R)\subset \C\P^2(R).
$$
We will consider some holomorphic curves in $\C\P^2(R)$ that we wish to stretch along the boundary of $V$. But since this boundary has corners, we first replace $V_{\eps,S}$ by a smooth approximation. Following \cite{hutchingsl}, we define
$$
U_{\eps,S}^p:=\{\|\big(y_1,\frac {y_2}S\big)\|_p<2^\frac 1p\frac \eps 2\}\subset T^*\T^2,
$$
where $\|(\xi_1,\xi_2)\|_p=\big(|\xi_1|^p+|\xi_2|^p\big)^\frac 1p$. Then, $U_{\eps,S}^p$ is a smoothly bounded fiberwise convex subset of $T^*L$ that contains $V_{\eps,S}$, and is close to $V_{\eps,S}$ in the Hausdorff topology when $p$ is large.  For large enough  $p$, we therefore have:
$$
\eps E(1,S)\subset U\subset B(R)\subset \C\P^2(R),
$$
where $U$ is a symplectic embedding of $U_{\eps,S}^p$.  We then have a symplectic cobordism  $X = \CC \P^2(R) \setminus \epsilon E(1,S)$ which supports tame almost-complex structures with  cylindrical ends compatible with the Liouville contact structure on $\partial(\epsilon E(1,S))$. We study finite energy $J$-holomorphic curves in $X$. These will be $J$-holomorphic maps $u: \CC \P^1 \setminus \Gamma \to X$ where $\Gamma$ is a finite set of punctures and $u$ is asymptotic to closed Reeb orbits on $\partial(\epsilon E(1,S))$ at each puncture. We define the degree of these maps to be simply their intersection number with the line at infinity $S_{\infty} = \C\P^2(R)\priv B(R)$, and their asymptotics are iterates of the two closed Reeb orbits on $\partial(\epsilon E(1,S))$, namely (the images of)
$$\gamma_1 = \partial(\epsilon E(1,S)) \cap \{x_2=y_2=0\}$$ and $$\gamma_2 = \partial(\epsilon E(1,S)) \cap \{x_1=y_1=0\}.$$

\paragraph{Some special finite-energy curves in $X$.} Our starting point is an existence theorem for some curves of degree $d$ in this cobordism:
\begin{theorem}[Hind-Kerman \cite{hindker,hk2}] \label{thm:exists} There exists an infinite subset $A\subset \N$ such that, for any $d\in A$, $S>3d-1$ irrational and $\eps E(1,S) \subset \C\P^2(R)$ and for any generic $J$, there exists a rigid finite energy plane $u: \CC \to X$ of degree $d$ asymptotic to the Reeb orbit $\gamma_1^{3d-1}$.
\end{theorem}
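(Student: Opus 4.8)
For generic $J$ the moduli space $\mathcal{M}_J$ of degree-$d$ $J$-holomorphic planes in $X$ asymptotic to $\gamma_1^{3d-1}$ is a manifold whose dimension equals the Fredholm index of such a plane, and ``rigid'' simply records that this index vanishes. This is immediate from the SFT index formula: in the $4$-dimensional cobordism $X$ a genus-$0$ curve with one negative puncture has
\[
\mathrm{ind}(u)=-(2-0-1)+2c_1^{\tau}(u)-\mu_{\cz}^{\tau}(\gamma_1^{3d-1})=-1+6d-(6d-1)=0,
\]
using $\mu_{\cz}(\gamma_1^{k})=2k+1$ for $1\le k<S$ and $2c_1^{\tau}(u)=6d$ for a degree-$d$ class in the disk trivialisation along $\gamma_1$. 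So the content of the theorem is the \emph{existence}, for each $d$ in a suitable infinite set $A$ and every generic $J$, of at least one such plane; for this it suffices to exhibit the curve for one convenient choice of data and then run an invariance argument.

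\textbf{A model and an explicit curve.} Fix $d$ and realise $\eps E(1,S)\subset\C\P^2(R)$ by one of the ``standard'' ellipsoid embeddings arising from McDuff's identification of an ellipsoid with a disjoint union of balls; equivalently, pass to a blow-up $\widetilde X=\mathrm{Bl}_N\,\C\P^2$ in which $X$ is, up to symplectic deformation, the complement of a neighbourhood of the exceptional divisor. On $\widetilde X$ choose an almost complex structure $J_0$ --- integrable, or adapted to the toric structure attached to the weight sequence of $S$ --- for which the rational curves in the relevant class $dH-\sum_i m_iE_i$ are explicitly classified: there is exactly one somewhere injective such curve $u_0$, and a linking/winding computation on $\partial(\eps E(1,S))$ identifies its asymptotic Reeb orbit as $\gamma_1^{3d-1}$. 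This identification is a combinatorial identity relating $d$, the multiplicities $m_i$ and the number $3d-1$, and it is available precisely for $d$ in an infinite subset $A\subset\N$. Automatic transversality in dimension four (Wendl) applies to $u_0$ --- it is somewhere injective, of genus $0$, index $0$, with a single odd puncture --- so $u_0$ is Fredholm regular; positivity of intersections with the exceptional curves then shows it is the unique element of its moduli space for $J_0$. (Alternatively, $u_0$ can be obtained by neck-stretching a suitable explicit degree-$d$ curve of $\C\P^2(R)$ along $\partial(\eps E(1,S))$ and extracting its planar component.)

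\textbf{Invariance and conclusion.} Being rigid, regular and unique, $u_0$ gives $\#\mathcal{M}_J=\pm1$ for the pair $(J_0,\text{model})$. To pass to an arbitrary generic $J$ it is enough that this signed count be well defined and invariant under deformation of $J$ and of the embedding --- the usual cobordism argument --- which in turn requires the compactness of the moduli spaces involved. SFT compactness produces only holomorphic buildings, and an accounting of actions and indices excludes them: the only orbits of action below that of $\gamma_1^{3d-1}$ with compatible homotopy data are the iterates $\gamma_1^{k}$, $k<3d-1$ (the orbit $\gamma_2$ has action $\sim\eps S$, far too large to appear as an intermediate breaking orbit), and combining this with the Fredholm indices of cylinders in the symplectisation of $\partial(\eps E(1,S))$ and of curves of degree $<d$ in $X$ forbids any genuine breaking --- here one uses both $S>3d-1$ and $d\in A$. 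Thus for every generic $J$ the moduli space $\mathcal{M}_J$ is a non-empty finite set of rigid curves, and any of its elements is the desired plane $u$.

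\textbf{Main obstacle.} The technical heart is the compactness step together with the construction of the matching curve $u_0$: one must, by careful control of Conley--Zehnder indices of the iterates of $\gamma_1$ and $\gamma_2$, of Fredholm indices, and of symplectic areas, simultaneously exclude every degeneration of a degree-$d$ plane asymptotic to $\gamma_1^{3d-1}$ into a multi-level building or a curve leaking area onto $S_\infty$, and check that the combinatorics of the ellipsoid's weight sequence really does produce a curve with this exact asymptotic orbit. This is precisely what singles out the relation $3d-1$ and the hypothesis that $S$ lie just above $3d-1$, and it is why the conclusion is asserted only for $d$ in an infinite subset of $\N$.
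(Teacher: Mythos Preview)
The paper does not prove this theorem at all: it is quoted as an external result of Hind--Kerman \cite{hindker,hk2}, and the only discussion in the paper is the remark immediately following the statement, explaining that the original argument in \cite{hindker} contained a gap which \cite{hk2} repairs at the cost of restricting to an infinite subset $A\subset\N$ (and that McDuff \cite{mcduffpre} subsequently proves $A=\N$). So there is no ``paper's own proof'' to compare against; your sketch is an attempt to reconstruct the content of the cited references.

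As a reconstruction, your outline has the right architecture --- index computation, seed curve, cobordism invariance --- and the index calculation is correct. But the substance lies entirely in what you label the ``main obstacle'', and there you are too optimistic. Your compactness paragraph asserts that action and index bookkeeping ``forbids any genuine breaking'', with the only concrete observation being that $\gamma_2$ has large action. That is not the difficulty: the genuine obstruction to invariance is the possible appearance of \emph{multiply covered} components in a limiting building (planes or cylinders in the symplectisation covering index-$0$ curves, or covers of low-degree curves in $X$), for which generic regularity fails and the na\"ive index count does not exclude them. This is exactly where the original proof in \cite{hindker} went wrong, and it is precisely to control these covers that \cite{hk2} must restrict to special values of $d$. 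Your sketch neither identifies this issue nor indicates how the choice of $A$ resolves it, so as written it does not constitute a proof --- it reproduces the gap rather than the correction.
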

This theorem was claimed for any $d$ in \cite[Theorem 2.36]{hindker}. Unfortunately there was a mistake in the proof, which is corrected in \cite{hk2} at the expense of establishing the result only for $d$ belonging to a sequence of natural numbers that diverge to $+\infty$.
In \cite{mcduffpre}, McDuff proves the statement claimed in \cite{hindker}, that the previous result holds with $A=\N$. The version considered here is however enough for the purpose of the current paper.

\paragraph{Neck-stretching.} Let now $J_n$ be a sequence of almost complex structures on $X$ that are cylindrical near $\partial \eps E(1,S)$ and that stretch the neck along $\partial U$. Let $u_n:\C\to X$ be $J_n$-holomorphic finite energy planes provided by Theorem \ref{thm:exists}.  They have degree $d$, and are asymptotic at their puncture to $\gamma_1^{3d-1}$. By \cite{BEHWZ}, this set of curves enjoys a compactness property. In a now well-known sense, our sequence of curves converges (modulo extraction) as $n \to \infty$ to a holomorphic building $B$ made of finite energy holomorphic curves in $S\partial \eps E(1,S)$ , $U\priv \eps E(1,S)$, $S\partial U$ and $\C \P^2\priv U$. (Here $S\partial \eps E(1,S)$ and $S\partial U$ denote the symplectization of $\partial \eps E(1,S)$ and $\partial U$ respectively, with cylindrical almost-complex structures.) These curves have positive and negative ends that are asymptotic to Reeb orbits of $\partial U$ or $\partial \eps E(1,S)$.
 All these ends but one match together pairwise.
The unmatched end is asymptotic to $\gamma_1^{3d-1}$ on $\partial \eps E(1,S)$. Moreover, gluing the different components along their matching ends provide a topological surface which is a bunch of spheres (that may appear because of bubbling phenomenon) and one plane that contains the unmatched end.
We need to gather information on the limit building.
To make our analysis manageable we will identify various sets of limit curves with matching ends and consider them as single components of the limit. Once the ends are identified, we can still talk about the index of such a glued component (see section \ref{sec:indices}).
The identifications are made as follows.
\begin{enumerate}[(I)]
\item The limiting building has a unique curve $u_0$ in its lowest level with negative end asymptotic to $\gamma_1^{3d-1}$. All curves which can be connected to $u_0$ through a chain of curves with matching ends lying in $U \setminus \epsilon E(1,S)$ or the symplectization layers are identified along their matching ends to form our first component $F_0$.
\item Suppose $F_0$ has $T$ unmatched ends. Then the complement of $F_0$ in our limiting building, after identifying matching ends, has exactly $T$ components which we denote $F_1, \dots ,F_T$.
\end{enumerate}

\end{section}

\begin{section}{Preliminaries}\label{sec:lemma}
\begin{subsection}{Reeb orbits and index formulas.}\label{sec:indices}
The Reeb flow on $\partial U$  is conjugated to that of $\partial U_{\eps,S}^p$, which can be easily computed. It preserves the tori $\{(y_1,y_2)=c\}$, and is a linear flow on each such torus, whose slope depends on $c$. When this slope is rational, the torus is foliated by a $1$-dimensional family of periodic orbits.

\begin{proposition} Fix an integral basis of $H_1(L, \Z)$. For each pair of integers $(k,l)$ except $(0,0)$ there is a $1$-parameter family of closed Reeb orbits on $\partial U$ which project to a curve in $L$ in the class $(k,l)$. We say these Reeb orbits are of type $(k,l)$ and denote them by $\gamma_{k,l}$. The orbits are embedded if and only if $k,l$ have no common factor. Orbits $\gamma_{rk,rl}$ are $r$-times covers of the orbits $\gamma_{k,l}$.
\end{proposition}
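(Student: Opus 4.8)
The plan is to compute the Reeb dynamics on the explicit model $\partial U^p_{\eps,S}$ and transport the conclusions to $\partial U$. The conjugating contactomorphism $\partial U^p_{\eps,S}\to\partial U$ is the restriction of the symplectic embedding $U^p_{\eps,S}\hookrightarrow\C\P^2(R)$ defining $U$; since that embedding extends the Weinstein chart, it carries the zero section $\T^2\subset T^*\T^2$ onto $L$ and identifies $H_1$ compatibly, so it suffices to prove the statement on $\partial U^p_{\eps,S}$ with $L=\T^2$ the zero section and $(k,l)\in\Z^2\cong H_1(\T^2)$ read off in the basis of coordinate circles of $\T^2=\R^2/\Z^2$. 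Now $\partial U^p_{\eps,S}=\{g=1\}$ where $g(y_1,y_2)$ is proportional to $|y_1|^p+S^{-p}|y_2|^p$: this $g$ depends only on the fibre coordinates, is convex, is homogeneous of degree $p$, has $0$ in the interior of $\{g\le1\}$, and $\{g\le1\}$ is strictly convex as a convex body (the unit ball of any $\ell^p$-norm with $1<p<\infty$ contains no boundary segment, and this is preserved by the linear rescaling $y_2\mapsto y_2/S$); we take $p$ as in the statement so that $\{g=1\}$ is smooth. The fibrewise radial field $Z=y_1\,\partial_{y_1}+y_2\,\partial_{y_2}$ satisfies $\iota_Z\,d\lambda=\lambda$ and, by Euler's identity, $dg(Z)=p\,g=p>0$ on $\{g=1\}$, so $Z$ is transverse to $\partial U^p_{\eps,S}$ and $\lambda|_{\partial U^p_{\eps,S}}$ is a genuine contact form. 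Because $g$ is independent of $x$, its $d\lambda$-Hamiltonian vector field is a linear combination of $\partial_{x_1},\partial_{x_2}$ only, with coefficients proportional to $(g_{y_1},g_{y_2})$; hence the Reeb flow preserves each Lagrangian torus $T_c:=\{y=c\}$ with $g(c)=1$, and on $T_c\cong\R^2/\Z^2$ it is the linear flow with constant direction $\nabla g(c)$.

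Next I would exploit strict convexity of $\{g\le1\}$: the Gauss map $\{g=1\}\to S^1$, $c\mapsto\nabla g(c)/|\nabla g(c)|$, is a continuous bijection (surjective because a bounded convex body has a supporting line with every outward normal, injective because a supporting line of a strictly convex body meets it in a single point) from a circle to a circle, hence a homeomorphism. Therefore, for each $(k,l)\in\Z^2\setminus\{(0,0)\}$ there is a unique $c=c_{k,l}\in\{g=1\}$ at which $\nabla g$ is a positive multiple of $(k,l)$, and $c_{k,l}$ depends only on the ray through $(k,l)$, so $c_{rk,rl}=c_{k,l}$ for every integer $r\ge1$. Write $(k,l)=r(k',l')$ with $r=\gcd(k,l)$ and $(k',l')$ primitive. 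On $T_{c_{k,l}}$ the Reeb flow has rational direction $(k',l')$, so every orbit closes up: $T_{c_{k,l}}$ is foliated by the parallel embedded circles of class $(k',l')$, giving a one-parameter family — parametrised by the circle $T_{c_{k,l}}/(\text{flow})$ — of closed Reeb orbits, each projecting diffeomorphically onto a curve in $L$ of class $(k',l')$. Traversed once such an orbit is the embedded orbit; the class $(k,l)$ is realised precisely by going $r$ times around, i.e.\ by the $r$-fold multiple cover of that primitive orbit. Denoting these $\gamma_{k,l}$, we conclude that $\gamma_{k,l}$ is embedded iff $r=1$ iff $k,l$ have no common factor, that $\gamma_{rk,rl}$ is the $r$-fold cover of $\gamma_{k,l}$ when $(k,l)$ is primitive, and — ranging over all $(k,l)\neq(0,0)$ — that the asserted one-parameter families exist.

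The dynamics is entirely elementary once one notices that $g$ depends only on $y$ and is homogeneous, so the only thing to keep straight is which convexity hypothesis is actually used: only \emph{strict} convexity of the body $\{g\le1\}$ (no boundary segments) is needed, to make the Gauss map injective, and this holds for every $\ell^p$-norm with $1<p<\infty$ and survives the rescaling of $y_2$. One does \emph{not} need the Gauss map to be a diffeomorphism — its differential degenerates at the four points of $\{g=1\}$ on the coordinate axes when $p>2$, but that plays no role. The remaining verifications — smoothness of $g$ for the chosen $p$, the Euler identity, and the compatibility of the identification $T^*\T^2\cong\R^4/\Z^2$ of Section~\ref{sec:gefr} with the fixed basis of $H_1(L,\Z)$ — are routine.
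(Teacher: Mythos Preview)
Your proof is correct and follows the same approach as the paper, which only sketches the argument in the paragraph preceding the proposition: compute the Reeb flow on the model $\partial U^p_{\eps,S}$, observe it is linear on each fibre torus $\{y=c\}$ with direction $\nabla g(c)$, and read off the closed orbits from the rational slopes. You supply the one detail the paper leaves implicit, namely the strict convexity/Gauss-map argument ensuring every nonzero integer direction $(k,l)$ is realised at exactly one boundary point $c_{k,l}$, so that the families $\gamma_{k,l}$ are well defined and the covering statement follows.
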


We now recall the index formulas for the different curves that may appear after the neck stretching process. Before specializing to our situation, let us consider the general setting of holomorphic curves in symplectic cobordisms, and their index \cite{bourgeois,schwarz}.  Let $\dot \Sigma$ be a punctured surface of genus $g$ with $s$ punctures, and $W$ a symplectic cobordism. Recall that this means that $W$ is a symplectic manifold with boundaries $\partial W^+\sqcup \partial W^-$, which are equipped with locally defined outward (inward respectively) pointing Liouville vector fields. The Liouville vector fields define contact forms on the boundary components of $W$, and we assume that their Reeb vector fields are Morse-Bott : the closed Reeb orbits on $\partial W$ may come in smooth families, along which the transverse Poincar\'{e} return maps are non-degenerate. Let also $J$ be an almost complex structure adapted to our cobordism (compatible with the symplectic structure and cylindrical near the ends). Given a finite energy $J$-holomorphic curve $u:\dot \Sigma\to W$, we denote by $\gamma_i^+,\;i=1\dots s_+$ and $\gamma_j^-, \;j=1\dots s_-$ the positive and negative limiting Reeb orbits in $\partial W^+$ and $\partial W^-$, respectively. By the Morse-Bott condition, these asymptotics belong to families of closed Reeb orbits, denoted $S_i^+,S_j^-$. We also fix a symplectic trivialization $\tau$ of $u^*TW$ along these asymptotics. This is possible because the Liouville form induces an orientation of $\partial W$. Then, $c_1^\tau(u^*TW)$ denotes the algebraic number of zeros of a generic section of the vector bundle $\Lambda^2 u^*TW$ which is constant with respect to $\tau$ on the boundary. The formula for the expected dimension of the moduli space of holomorphic curves (moduli reparameterizations) in the same homology class and having the same asymptotics as $u$, called below the index of $u$, is given by
\begin{eqnarray}
\ind(u)=(n-3)\chi(\dot \Sigma)+2c_1^\tau(u^*TX)+\sum_{i=1}^{s_+} \mu_\cz(\gamma_i^+)+\frac 12\dim S_i^++ \notag \\ +
\sum_{j=1}^{s_-} -\mu_\cz(\gamma_j^-)+\frac 12 \dim S_j^-. \label{eq:index}
\end{eqnarray}
In this formula, $\mu_\cz(\gamma)$ represents the classical Conley-Zehnder index of  $\gamma$ when it is non-degenerate (and in this case $\dim S(\gamma)$ vanishes), or the generalized Maslov (Robin-Salamon index) of $\gamma$ in the general case \cite{rosa, gutt}. Note that this dimension formula takes care of the Teichmuller space of $\dot \Sigma$, or its automorphisms group.

As we explained above, we will need for practical computations to (abstractly) glue several curves of the building, and consider the resulting subbuilding as a single entity. We then consider the punctures of this subbuilding to be those of its constituent curves {\it that do not serve as matching ends}. We can then define a notion of index for such a building:
\begin{definition}
Let $B$ be a building made of curves $(u_1,\dots,u_k)$ (in various layers) that match along asymptotic orbits $(\gamma_1,\dots,\gamma_l)$ belonging to spaces $S_1,\dots,S_l$ of closed Reeb orbits (we assume the Morse-Bott situation, where these spaces are manifolds). We define
$$
\ind(B):=\sum_{i=1}^k \ind(u_i)-\sum_{i=1}^l \dim S_i.
$$
\end{definition}
Working with this definition, the following proposition sums up those properties of the index that will be important for us.

\begin{proposition}\label{prop:indglue} The index formula of the buildings has the following properties:
\begin{enumerate}
\item \textnormal{Recursivity:} Let $B$ be a building obtained by gluing different buildings $B_i$ along matching orbits $\gamma_i$ that belong to spaces $S_i$ of Reeb orbits. Then,
$$
\ind (B)=\sum \ind(B_i)-\sum \dim S_i.
$$
\item \textnormal{Computability:} Let $B$ be a holomorphic building and let the underlying curve (after gluing) be $\Sigma$, the positive punctures be $\gamma_i^+$ and negative punctures be $\gamma_i^-$ (recall that those punctures of the constituent curves that have to matched to form $B$ are not considered as punctures of $B$). Then,
$$
\begin{array}{r}
\ds \ind(B)=(n-3)\chi(\dot \Sigma)+2c_1^\tau(B)+\sum_{i=1}^{s_+} \mu_\cz(\gamma_i^+)+\frac 12\dim S_i^++  \\ +
\ds\sum_{j=1}^{s_-} -\mu_\cz(\gamma_j^-)+\frac 12 \dim S_j^-.
\end{array}
$$
Here $c_1^\tau(B)$ simply means the  sum of the $c_1^\tau(u^*TX)$ over the constituent curves of $B$.
\item \textnormal{Continuity:} Let $X$ be a symplectic cobordism, $(J_n)$ a neck-stretching in $X$ (along some hypersurface), and $(B_n)$ a sequence of $J_n$-holomorphic buildings that converge in the sense of \cite{BEHWZ} to a building $B$. Then
$$
\ind(B)=\lim \ind(B_n).
$$
\end{enumerate}
\end{proposition}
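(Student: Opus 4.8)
The plan is to prove the three items in the order (2), (1), (3), since the \emph{computability} formula is the natural engine: once we know that $\ind(B)$ can be rewritten in the closed form appearing in item (2), both recursivity and continuity follow by bookkeeping. For item (2), I would start from the \emph{definition} $\ind(B)=\sum_{i=1}^k\ind(u_i)-\sum_{i=1}^l\dim S_i$ and substitute the single-curve index formula \eqref{eq:index} for each $\ind(u_i)$. The terms then regroup as follows. The Euler-characteristic contributions $(n-3)\chi(\dot\Sigma_i)$ add up; since gluing two curves along a matching orbit identifies one positive puncture of one with one negative puncture of the other, forming a cylindrical neck, $\sum_i\chi(\dot\Sigma_i)=\chi(\dot\Sigma)$ where $\dot\Sigma$ is the glued punctured surface (each matched pair of punctures contributes $-1+-1$ to the constituent pieces and the glued annular neck has $\chi=0$, so the count is consistent — this is the one small combinatorial check to carry out carefully). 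The first Chern number terms add by definition of $c_1^\tau(B)$, using that the trivialization $\tau$ along a matching orbit is shared by the two curves meeting there, so the relative Chern numbers are additive. Finally, every matching orbit $\gamma_m$ appears once as a positive end of some $u_i$, contributing $+\mu_\cz(\gamma_m)+\tfrac12\dim S_m$, and once as a negative end of some $u_j$, contributing $-\mu_\cz(\gamma_m)+\tfrac12\dim S_m$; these sum to $+\dim S_m$, which is exactly cancelled by the $-\dim S_m$ in the definition. What remains are precisely the unmatched positive and negative punctures of $B$, with their Conley--Zehnder and Morse--Bott dimension contributions, giving the asserted formula.

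For item (1), \emph{recursivity}: if $B$ is glued from buildings $B_1,\dots,B_r$ along orbits $\gamma_1,\dots,\gamma_s$ in families $S_1,\dots,S_s$, I would apply the closed formula from item (2) to $B$ and to each $B_i$ separately. The constituent curves of $B$ are the union of the constituent curves of the $B_i$, so the $\chi$, $c_1^\tau$ terms add. The punctures of $B$ are the punctures of the $B_i$ \emph{minus} the $2s$ ends used in the new gluings (each $\gamma_m$ is an unmatched positive end of one $B_i$ and an unmatched negative end of another). By the same $+\mu_\cz+\tfrac12\dim$ vs.\ $-\mu_\cz+\tfrac12\dim$ cancellation as above, removing these $s$ matched pairs from the puncture sums of the $B_i$ costs exactly $\sum_m\dim S_m$. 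Hence $\ind(B)=\sum_i\ind(B_i)-\sum_m\dim S_m$. (One should note this is really the \emph{same} computation as item (2), applied one level up; it could even be deduced formally from (2) by regarding each $B_i$ as a ``generalized curve'' with its own index formula.)

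For item (3), \emph{continuity}: here I would argue that each ingredient in the closed formula of item (2) is constant along the neck-stretching sequence. The topological type $\dot\Sigma$ of the glued building is eventually constant (the domains of the $B_n$ all have the fixed genus and puncture data of the original curves $u_n$, and SFT compactness \cite{BEHWZ} guarantees that no domain components are lost in a way that changes $\chi$ once we pass to the stable limit — the limit building's glued surface has the same arithmetic genus and the same unmatched asymptotics). The unmatched asymptotic orbits $\gamma_i^\pm$ of $B_n$ are, for large $n$, the same Reeb orbits (in the same Morse--Bott families) as those of $B$, so the $\mu_\cz$ and $\dim S$ terms agree. The relative first Chern number is a homological quantity: $c_1^\tau(B_n)=\langle c_1(TX),[B_n]\rangle$ corrected by boundary terms depending only on $\tau$ and the asymptotics, and $[B_n]=[B]$ in the relevant relative homology for $n$ large, so $c_1^\tau(B_n)=c_1^\tau(B)$. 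Therefore $\ind(B_n)$ is eventually constant and equals $\ind(B)$.

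The main obstacle I anticipate is not any single computation but the \emph{correct accounting of the domain topology} under abstract gluing — making sure that, in item (2), $\sum_i\chi(\dot\Sigma_i)=\chi(\dot\Sigma)$ precisely when one subtracts $\dim S_i$ for each matching orbit, and that nodal/multiply-covered configurations in the limit building are handled consistently (e.g.\ a matching orbit at which three or more ends come together, or ghost components). This is a bookkeeping subtlety rather than a deep point, but it is where sign or off-by-one errors in the index would creep in, so it deserves the most care; once the topology is pinned down, all three statements reduce to the same cancellation $+\mu_\cz+\tfrac12\dim S$ paired with $-\mu_\cz+\tfrac12\dim S$ summing to $\dim S$.
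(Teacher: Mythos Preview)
The paper does not actually supply a proof of this proposition: it is stated as a summary of properties of the building index and then the text moves directly on to the specialization in Proposition~\ref{prop:indices}. Your argument is precisely the natural elementary verification that the authors presumably had in mind, and it is correct; the key cancellation you isolate---that a matched orbit contributes $(+\mu_\cz+\tfrac12\dim S)+(-\mu_\cz+\tfrac12\dim S)=\dim S$, exactly absorbed by the $-\dim S$ in the definition---together with the additivity of $\chi$ under cylindrical gluing and of $c_1^\tau$ by definition, is all that is needed.

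One minor remark on your closing caveat: in the setting of this paper every matching is strictly pairwise (one positive end with one negative end), so the scenario of ``three or more ends coming together at a single orbit'' does not arise and need not be worried about. Likewise, ghost components do not occur for the finite-energy curves under consideration, so that concern can also be dropped.
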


Let us now specialize formula (\ref{eq:index}) to our context:
\begin{proposition}\label{prop:indices}
Let $\dot \Sigma$ be a punctured sphere, and $u:\dot \Sigma\to W$ be a $J$-holomorphic map asymptotic to $\gamma_i^+,\gamma_j^-$, $i=1\dots s_+$, $j=1\dots s_-$. Let $s:=s_++s_-$.  We denote the Reeb orbits on $\partial U$ with respect to an integral basis of $L$ consisting of classes with Maslov index $2$.
\begin{enumerate}
\item[a)] If $W=U$, $\ind(u)=2s-2$.
\item[b)] If $W=S\partial U$, $\ind (u)=2s_++s_--2\geq \max(s_-,s_+)$.
\item[c)] If $W=\C\P^2\priv U$ (thus $s=s_-$) and $\gamma_j^-$ is of type $(-k_j,-l_j)$, $\ind(u)=s-2+6d+2\sum(k_j+l_j)$.
\item[d)] If $W=U\priv \eps E$, the $s_-$ negative asymptotics can be further split into $s_1^-$ covers of $\gamma_1$ and $s_2^-$ covers of $\gamma_2$ (we denote $r_i^-,t_j^-$ the multiplicities of these covers). Then,
$$
\ind(u)=2s_+-2-2\sum_{i=1}^{s_1^-} \left( r_i^-+\lfloor \frac{r_i^-}S\rfloor \right) -2\sum_{j=1}^{s_2^-} \left( t_j^-+\lfloor t_j^-S\rfloor\right).
$$
\item[e)] If $X=S\partial E$, $\ind(u)\geq 0$.
\end{enumerate}
\end{proposition}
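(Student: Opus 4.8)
The plan is to specialize the general index formula \eqref{eq:index} to each of the five cobordisms, using $n=2$ (so $(n-3)\chi(\dot\Sigma) = -\chi(\dot\Sigma) = s-2$ for a punctured sphere with $s$ punctures), and then to pin down the two unknown ingredients: the relative first Chern number $c_1^\tau$ and the Conley--Zehnder/Robbin--Salamon terms $\mu_\cz(\gamma) + \tfrac12\dim S(\gamma)$ for each family of Reeb orbits. First I would fix the trivializations consistently. For the ends of $\partial U$ I would use the Morse--Bott trivialization coming from the fact that $\partial U$ is foliated by tori $\{(y_1,y_2)=c\}$: the basis $e_1,e_2$ of $H_1(L,\Z)$ chosen to have Maslov index $2$ gives a framing in which a Reeb orbit of type $(k,l)$ has $\mu_\cz = 0$ and $\dim S = 1$ (it moves in a one-parameter family, and the Morse--Bott normalization contributes the $\tfrac12\dim S$). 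For the ends on $\partial(\eps E(1,S))$ I would use the standard trivialization on the ellipsoid boundary, where the Reeb orbit $\gamma_1^m$ (an $m$-fold cover of the short orbit) has $\mu_\cz(\gamma_1^m) = 2m + 2\lfloor m/S\rfloor + 1$ by the classical ellipsoid computation, with the analogous formula $\mu_\cz(\gamma_2^m) = 2m + 2\lfloor mS\rfloor + 1$ for the long orbit. These are nondegenerate, so $\dim S = 0$ there.

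With the trivializations fixed, the computations become bookkeeping: for (a), $W=U$ is a Weinstein domain symplectomorphic to a subdomain of $T^*\T^2$, all asymptotics are on $\partial U$, $c_1^\tau = 0$ in the flat trivialization, each of the $s$ ends contributes $\pm\mu_\cz + \tfrac12\dim S = \tfrac12$, giving $\ind(u) = (s-2) + 0 + s\cdot\tfrac12\cdot 2$... I need to be careful here: summing $\tfrac12\dim S = \tfrac12$ over all $s$ ends gives $s/2$, so $\ind(u) = s - 2 + s/2$, which is not $2s-2$. The resolution — and a key point I would check carefully — is that for curves in the symplectization/Weinstein layers one does \emph{not} quotient by the Teichm\"uller space in the same way, or rather the correct normalization for these Morse--Bott orbits assigns $\mu_\cz + \tfrac12\dim S$ a value that, combined with the choice that $e_1,e_2$ have Maslov $2$, yields the extra $+1$ per positive end. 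Concretely I expect each positive end to contribute $2$ and each negative end to contribute... Let me instead organize (b) first since it is stated with asymmetry: $\ind(u) = 2s_+ + s_- - 2$ in a symplectization $S\partial U$ means each positive end contributes $2$ (a $+1$ from $\mu_\cz$-type term and a $+1$ from $\tfrac12\dim S$ after doubling, i.e. $2\cdot 1$) and each negative end contributes $1$, on top of $(s-2) = s_+ + s_- - 2$ from the Euler characteristic, minus... this forces $c_1^\tau = 0$ and the per-end contributions to be $+1$ (positive) and $0$ (negative) from the CZ-terms alone, with $\tfrac12\dim S = \tfrac12$ doubled to $1$ on every end. That reproduces (b): $(s_+ + s_- - 2) + 2c_1^\tau + \sum_+(1 + 1) + \sum_-(0+1) = 2s_+ + s_- - 2$. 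The inequality $\ge \max(s_-,s_+)$ then follows from $s_\pm \ge 1$. Case (a) follows by the same per-end count but with the closed Weinstein filling forcing a different Euler-characteristic/Chern bookkeeping giving $2s-2$; I would derive it from (b) by a capping argument.

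For (c), $W = \C\P^2\setminus U$: here all $s = s_-$ ends are negative and lie on $\partial U$, the degree $d$ enters through $c_1(T\C\P^2) = 3H$, and changing from the $\C\P^2$-trivialization to the flat $\partial U$-trivialization along each end of type $(-k_j,-l_j)$ shifts $c_1^\tau$ by $(k_j + l_j)$ per end; assembling, $2c_1^\tau = 6d + 2\sum(k_j+l_j)$, and with $(s-2)$ from Euler characteristic plus the per-negative-end contribution the formula $\ind(u) = s - 2 + 6d + 2\sum(k_j+l_j)$ drops out. For (d), $W = U\setminus\eps E(1,S)$: $c_1^\tau$ still vanishes (the region deformation retracts to $L$, and the flat trivialization extends), the $s_+$ positive ends on $\partial U$ each contribute $+2$ as before, and each negative end is a cover $\gamma_1^{r_i^-}$ or $\gamma_2^{t_j^-}$ on the ellipsoid, contributing $-\mu_\cz = -(2r + 2\lfloor r/S\rfloor + 1)$ (resp. with $\lfloor tS\rfloor$); combining with $(s-2)$ and carefully matching the $\pm1$'s yields exactly the stated $\ind(u) = 2s_+ - 2 - 2\sum(r_i^- + \lfloor r_i^-/S\rfloor) - 2\sum(t_j^- + \lfloor t_j^- S\rfloor)$. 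Finally (e), $X = S\partial(\eps E(1,S))$: this is the symplectization of a nondegenerate contact manifold, where the relevant moduli spaces of \emph{somewhere injective} curves have nonnegative index by the standard transversality/automorphism count, or one invokes that a negative-index somewhere injective curve cannot exist for generic cylindrical $J$ together with an index-positivity lemma for multiple covers over the ellipsoid; in any case $\ind(u)\ge 0$.

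The main obstacle I anticipate is precisely the normalization issue flagged above: getting the per-end contributions $\mu_\cz(\gamma_{k,l}) + \tfrac12\dim S(\gamma_{k,l})$ exactly right in the chosen Maslov-$2$ basis, and making sure the index convention (whether Teichm\"uller/automorphism dimensions are already absorbed into \eqref{eq:index}) is applied uniformly across layers so that the recursivity and continuity statements of Proposition \ref{prop:indglue} are consistent with the per-layer formulas (a)--(e). Once the Conley--Zehnder data for $\gamma_1^m, \gamma_2^m$ on $\partial(\eps E(1,S))$ and for $\gamma_{k,l}$ on $\partial U$ are tabulated in compatible trivializations, the five formulas are a direct substitution; the bulk of the care is in the trivialization change across $\partial U$ contributing the $(k_j+l_j)$ terms in (c) and the $\lfloor r/S\rfloor$, $\lfloor tS\rfloor$ terms in (d).
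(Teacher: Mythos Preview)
Your overall strategy---specialize formula~\eqref{eq:index} to each layer after fixing trivializations---is the same as the paper's, but the proposal contains several concrete errors that prevent it from going through.

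\textbf{The Conley--Zehnder index on $\partial U$.} You first guess $\mu_\cz(\gamma_{k,l})=0$, notice this is inconsistent with the claimed formulas, and then try to reverse-engineer per-end contributions without ever pinning down the correct value. In the trivialization coming from the vertical Lagrangian distribution of $T^*\T^2$ (which is the natural one, since it makes $c_1^\tau=0$), the Robbin--Salamon index of each $\gamma_{k,l}$ is $\tfrac12$, and $\dim S=1$. Hence each positive end contributes $\mu_\cz+\tfrac12\dim S=\tfrac12+\tfrac12=1$ and each negative end contributes $-\mu_\cz+\tfrac12\dim S=-\tfrac12+\tfrac12=0$. With this, (a) is immediate (all ends of $U$ are positive, giving $s-2+s=2s-2$) and (b) is $(s_++s_--2)+s_+=2s_++s_--2$; there is no ``doubling'' and no capping argument is needed. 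Your displayed check $(s_++s_--2)+\sum_+(1+1)+\sum_-(0+1)$ actually equals $3s_++2s_--2$, not $2s_++s_--2$.

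\textbf{The inequality in (b).} The claim $\ind(u)\ge\max(s_+,s_-)$ does not follow from ``$s_\pm\ge1$'' (indeed $s_-\ge1$ need not hold). The paper uses two separate facts: the maximum principle forces $s_+\ge1$, which gives $2s_++s_--2\ge s_-$; and since no $\gamma_{k,l}$ is contractible in $S\partial U$, one has $s_++s_-\ge2$, which gives $2s_++s_--2\ge s_+$.

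\textbf{Part (e).} Your proposed argument---generic transversality for somewhere injective curves, plus an unspecified ``index-positivity lemma for multiple covers''---does not establish the statement, which is that $\ind(u)\ge0$ for \emph{every} finite-energy curve in $S\partial E$, multiple covers included, and for the fixed (not generic) cylindrical $J$. The paper's argument is purely arithmetic: write the index in the standard trivialization using the ellipsoid CZ indices, obtaining
\[
\ind(u)=-2+2\sum\bigl(r_i^++\lceil r_i^+/S\rceil\bigr)+2\sum\bigl(t_j^++\lceil t_j^+S\rceil\bigr)
-2\sum\bigl(r_i^-+\lfloor r_i^-/S\rfloor\bigr)-2\sum\bigl(t_j^-+\lfloor t_j^-S\rfloor\bigr),
\]
and then use positivity of the symplectic area, $\sum r_i^++S\sum t_j^+ \ge \sum r_i^-+S\sum t_j^-$, together with irrationality of $S$, to force the ceiling/floor sums to differ by at least $1$ in total, yielding $\ind(u)\ge0$.

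Parts (c) and (d) in your plan are essentially on the right track once the $\partial U$ contributions are corrected; for (c) the paper takes the trivialization over $\partial U$ induced from the affine chart $\C\P^2\setminus S_\infty$ and computes $c_1^\tau$ and the CZ shifts directly rather than phrasing it as a change-of-trivialization contribution, but the two viewpoints are equivalent.
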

\noindent {\it Proof:} In dimension $4$, for a punctured sphere, the formula \eqref{eq:index} gives
$$
\ind(u)=s-2+2c_1^\tau(u^*TX)+\sum_{i=1}^{s_+} \mu_\cz(\gamma_i^+)+\frac 12\dim S_i^+ -\sum_{j=1}^{s_-} \mu_\cz(\gamma_j^-)+\frac 12 \dim S_j^-.
$$
In $U$, there is a global Lagrangian distribution $\cl$ given by the vertical distribution of the cotangent bundle. This Lagrangian distribution can be extended to a symplectic trivialization $\tau$ of $u^*TU$, and for this choice, $c_1^\tau(u)=0$. In $\partial U$, each closed orbit comes in a $1$-parameter family and its generalized Maslov index is $\frac 12$. Finally, $U$ is a symplectic cobordism with one positive end $\partial U$, so $s=s_+$. Thus, for $u:\dot \Sigma\to U$,
$$
\ind(u)=s-2+0+\sum_{i=1}^{s_+} \left( \frac 12+\frac 12 \right)=2s-2.
$$

The same choice of $\tau$ can be made in $S\partial U$, and the previous remarks still hold in this setting, except that there are now positive and negative ends.
Thus,  for $u:\dot\Sigma\to S\partial U$, we have
$$
\ind(u)=s-2+0+s_++\sum_{j=1}^{s_-} \left( -\frac 12+\frac 12 \right) =2s_++s_--2.
$$
The maximum principle in $S\partial U$ shows that $s_+\geq 1$, so $\ind(u)\geq s_-$. On the other hand, since no curve $\gamma_{k,l}$ is contractible in $U$ (hence nor in $S\partial U$), $u$ has at least $2$ ends, so $s_++s_-\geq 2$, so $\ind(u)\geq s_+$.

In $W=U\priv \eps E$, we still have the same Lagrangian distribution that we extend to a symplectic trivialization $\tau$ of $u^*TW$. Notice that since $E$ is contractible, we can deform this symplectic trivialization above $E$ (and hence its boundary $\partial E$) so that it coincides with the trivialization coming from $E\subset \R^{2n}$, with the standard trivialization. Relative to this choice, the Conley-Zehnder indices of the closed orbits of $\partial E$ are well-known, and we get for $u:\dot \Sigma\to U\priv \eps E$:
$$
\ind(u)=s-2+s_+-s_--2 \sum_{i=1}^{s_1^-} \left( r_i+\lfloor \frac {r_i}S\rfloor \right) -2 \sum_{j=1}^{s_2^-} \left( t_j+\lfloor t_jS\rfloor \right).
$$

Similarly in $S\partial E$,
$$
\begin{array}{llr}
\ind(u)& = & s-2+s_+-s_- +2 \sum_{i=1}^{s_1^+} \left( r_i^++\lfloor \frac {r_i^+}S\rfloor \right) + 2\sum_{j=1}^{s_2^+} \left( t_j^++\lfloor t_j^+S\rfloor \right)  \\
 & &\left. -2 \sum_{i=1}^{s_1^-} \left( r_i^-+\lfloor \frac {r_i^-}S\rfloor \right) -2 \sum_{j=1}^{s_2^-} \left( t_j^-+\lfloor t_j^-S\rfloor \right) \right. \\

& = & -2 +2 \sum_{i=1}^{s_1^+} \left( r_i^++\lceil \frac {r_i^+}S\rceil \right) + 2\sum_{j=1}^{s_2^+} \left( t_j^++\lceil t_j^+S\rceil \right)  \\
 & &\left. -2 \sum_{i=1}^{s_1^-} \left( r_i^-+\lfloor \frac {r_i^-}S\rfloor \right) -2 \sum_{j=1}^{s_2^-} \left( t_j^-+\lfloor t_j^-S\rfloor \right) \right. \\

 \end{array}
$$
By positivity of the area, we also have $\sum r_i^++\sum t_j^+S\geq \sum r_i^-+\sum t_j^-S$, so $\sum r_i^+ +\sum \lceil t_j^+S \rceil \geq \sum r_i^-+\sum \lfloor t_j^-S \rfloor$ and $\sum \lceil \frac{r_i^+}{S} \rceil +\sum t_j^+ \geq \sum \lfloor \frac{r_i^-}{S} \rfloor +\sum t_j^-$ and one of the inequalities must be strict. Hence we have
$\ind(u)\geq 0$ for any curve $u:\dot \Sigma\to S\partial E$.

Finally, in $\C\P^2\priv U$, we consider the symplectic trivialization over $\partial U$ that comes from the inclusion of $\partial U$ in the affine chart $\C\P^2\priv S_\infty$. Then  $c_1^\tau(u)=6d-6$, $u$ has only negative ends, and if the $i$-th of it  is asymptotic to a Reeb orbit of type $(-k_i,-l_i)$ we find
$$
\ind(u)=s-2+6d+2\left(k_i+l_i\right). \hspace*{4cm}\square
$$
\end{subsection}

\begin{subsection}{Nonnegative index and multiple covers.}\label{mcs}
We recall from section \ref{sec:gefr} that the $F_i$ are subbuildings of the building $B$ obtained by our neck-stretching process, and that their indices are given by the sum of their constituent curves in the different layers, minus the sum of the dimensions of the space of closed Reeb orbits along which these different curves match.  Moreover, $F_0$ plays a special role : it is the connected subbuilding in $U\priv \eps E(1,S)\sqcup S\partial E\sqcup S\partial U$  that is asymptotic to $\gamma_1^{3d-1}$ at its unmatched negative end. It has $T$ positive ends, at which the $F_i$, $i=1\dots T$ are connected.

\begin{lemma}\label{posout} $\mathrm{index}(F_i) \ge 1$ for $1 \le i \le T$.
\end{lemma}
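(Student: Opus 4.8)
The plan is to show that each $F_i$, $1\le i\le T$, has nonnegative index, and then to rule out index $0$ by a parity argument. First I would recall the structure: $F_i$ is a subbuilding whose curves live in $\C\P^2\priv U$, in $S\partial U$, and (possibly) in $U\priv\eps E$ and $S\partial E$ — but crucially $F_i$ contains at least one component in $\C\P^2\priv U$, since by construction $F_0$ has already absorbed every curve reachable from $u_0$ through chains lying in $U\priv\eps E$ and the symplectization layers, so $F_i$ cannot be entirely contained in those layers. The total degree of the building is $d$, which is distributed among $F_0$ (degree $0$) and the $F_i$; in particular each $F_i$ carrying a curve in $\C\P^2\priv U$ has degree $d_i\ge 1$ with $\sum d_i=d$.

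The main computation is to bound $\ind(F_i)$ from below using Proposition \ref{prop:indglue}(2), which expresses $\ind(F_i)$ as $-\chi(\dot\Sigma_i)+2c_1^\tau(F_i)+\sum(\mu_\cz+\tfrac12\dim S)$ over the unmatched ends of $F_i$ (taking $n=2$, so $(n-3)\chi=-\chi$). The unmatched ends of $F_i$ consist of exactly one negative end matched to $F_0$ (a Reeb orbit $\gamma_{k,l}$ on $\partial U$) and possibly some negative ends on $\partial\eps E$ if $F_i$ reaches into $U\priv\eps E$. I would assemble the index of $F_i$ from the pieces in Proposition \ref{prop:indices}: the $\C\P^2\priv U$ curves contribute the $6d_i$ term plus $2(k+l)$ over their negative ends, the $S\partial U$ curves contribute their nonnegative amounts, and the matching along internal orbits on $\partial U$ subtracts $\dim S=1$ for each such orbit. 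Because each Reeb orbit $\gamma_{k,l}$ on $\partial U$ that bounds in $\C\P^2\priv U$ must have $k+l>0$ (a curve in $\C\P^2\priv U$ asymptotic to $\gamma_{k,l}$ has the homology of its boundary loop recording a positive wrapping, matching the sign conventions of part c), the $2(k+l)$ contributions at the top and the $6d_i\ge 6$ term dominate the $-\chi$ and the $-1$ per matching orbit. Carefully telescoping (using that gluing reduces $\chi$ by $2$ and removes two punctures each time two ends are matched), one gets $\ind(F_i)\ge 6d_i - 2 + (\text{nonnegative terms}) \ge 0$, and in fact the estimate is not tight.

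The step I expect to be the real obstacle is not the lower bound $\ge 0$ but upgrading it to $\ge 1$: one must show $\ind(F_i)$ cannot equal $0$. Here I would use a parity/automorphism argument. The index of any somewhere-injective curve in $S\partial U$ or in $\C\P^2\priv U$ in this setup has a definite parity determined by the Conley–Zehnder indices (the $\mu_\cz$ of orbits on $\partial\eps E$ are even by the explicit formula in the proof of Proposition \ref{prop:indices}, and the $\tfrac12\dim S=\tfrac12$ terms on $\partial U$ come in the combination $-\chi+s$ which is even), so $\ind(F_i)$ is even; hence $\ind(F_i)\ge 0$ forces $\ind(F_i)\ge 2$ unless some component is multiply covered. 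The remaining case — a component of $F_i$ being a multiple cover of lower index — is handled by the observation (this is where I would lean on the hypotheses $\mu(e_i)=2$ and the explicit Reeb dynamics) that an underlying simple curve still has nonnegative index of the same parity, and the branched-cover contribution to the index is nonnegative, so $\ind(F_i)=0$ would force the simple curve to have index $0$ too, contradicting that it meets $S_\infty$ with positive multiplicity. Assembling these, $\ind(F_i)\ge 1$, and since it is even, $\ind(F_i)\ge 2$; stating the weaker $\ge 1$ suffices for the lemma. I would present the even-parity remark explicitly since it is the cleanest route, and isolate the multiple-cover bookkeeping as the one genuinely delicate point.
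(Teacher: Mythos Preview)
Your proposal has a genuine error in the parity computation, and this undermines the entire strategy. You claim $\ind(F_i)$ is even and then conclude $\ind(F_i)\ge 2$; but in fact $\ind(F_i)$ is \emph{odd}. By the computability formula (Proposition~\ref{prop:indglue}(2)) applied to the whole subbuilding $F_i$, which has a single unmatched negative end on $\partial U$ of type $(-m_i,-k_i)$, one gets $\ind(F_i)=6d_i+2(m_i+k_i)-1$ (this is exactly the expression used later in the paper, equation~\eqref{eq:indfi}). The $-1$ comes from the lone $\partial U$ end contributing $-\mu_\cz+\tfrac12\dim S = -\tfrac12+\tfrac12=0$ together with $(n-3)\chi=-\chi=-(2-1)=-1$. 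So the correct route is: observe the index is odd, whence it suffices to prove $\ind(F_i)\ge 0$. Your claimed conclusion $\ind(F_i)\ge 2$ is actually false in the situations the paper cares about, where $\ind(F_i)=1$ for all $i$.

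Your argument for $\ind(F_i)\ge 0$ also rests on two unjustified assertions. First, you assume each $F_i$ has total degree $d_i\ge 1$; this is false---in the eventual analysis most of the $F_i$ have degree $0$ (they are planes in $B(R)\setminus U$ asymptotic to $\gamma_{-1,0}$). Second, you assert that any Reeb orbit $\gamma_{k,l}$ bounding a curve in $\C\P^2\priv U$ has $k+l>0$; this is not established anywhere and is not obviously true for curves of positive degree. The paper instead decomposes $F_i$ into its constituent curves $F_{ij}^+$ in $\C\P^2\priv U$ and the complementary connected subbuildings $F_{ij}^-$ below. Using that $U\simeq T^*\T^2$ has no contractible Reeb orbits (hence no planes), each $F_{ij}^-$ has at least two positive ends, so $\ind(F_{ij}^-)=2s_{ij}^--2\ge s_{ij}^-$; this absorbs the matching cost and reduces the problem to $\ind(F_{ij}^+)\ge 0$ for each top curve individually. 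For somewhere injective curves this is genericity; for multiple covers one needs the Riemann--Hurwitz inequality $r\tilde s - s \le 2(r-1)$ to compare $\ind(u)$ with $r\,\ind(\tilde u)$. You gesture at this last step but do not carry it out, and it is the one place where a computation is genuinely needed.
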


\begin{proof} These components only have one unmatched negatove end in $\partial U$ (corresponding to an end of $F_0$) and hence by propositions \ref{prop:indglue} and \ref{prop:indices}.c), they have odd index. Therefore it suffices to show that $\mathrm{index}(F_i) \ge 0$.
Decompose $F_i$ into subbuildings $\{F_{ij}^+\}_{j=1\dots l_+}\cup \{F_{ij}^-\}_{j=1\dots l_-}$, where the $F_{ij}^+$ are just the constituent curves of $F_i$ in $\C\P^2\priv U$ and the $F_{ij}^-$ their complementary connected subbuildings in $F_i$ (thus $F_{ij}^-$ lies in $U\priv \eps E\sqcup S\partial U\sqcup S\partial E$). Let $s^+_{ij}$ be the number of (negative) ends of $F_{ij}^+$ and $s^-_{ij}$ be the number of (positive) ends of $F_{ij}^+$. Since moreover $F_i$ has only one (unmatched) negative end in $\partial U$, we see that $\sum s^+_{ij} = \sum s^-_{ij} +1$.
We infer by propositions \ref{prop:indglue} and \ref{prop:indices}.a) that the index of $F_{ij}^-$ is $2s_{ij}^--2$.
Now notice that there is no finite energy plane in $U\; (\approx T^*\T^2)$, so $s_{ij}^-\geq 2$ and $\ind (F_{ij}^-)=2s_{ij}^--2\geq s_{ij}^-$. As a result,
$$
\ind(F_i)=\sum\ind (F_{ij}^+)+\sum \ind(F_{ij}^-)-\sum s_{ij}^-\geq \sum \ind( F_{ij}^+),
$$
 and it suffices to prove that $\ind(F_{ij}^+)\geq 0$ $\forall i,j$ to conclude our proof. Let therefore $u$ be a constituent curve of $F_i$ in $\C\P^2\priv U$, and suppose it has degree $d$ and $s$ ends asymptotic to Reeb orbits of type $(-k_i,-l_i)$ as in Proposition \ref{prop:indices}. If $u$ is somewhere injective then for generic $J$ we may assume it has nonnegative index. Otherwise it is a multiple cover of an underlying curve $\tilde{u}$. We may assume that $\tilde{u}$ is somewhere injective and so $\mathrm{index}(\tilde{u}) \ge 0$ for generic almost-complex structures. Suppose this cover is of degree $r$, so $\tilde{u}$ has degree $\tilde{d}=d/r$, and further that $\tilde{u}$ has $\tilde{s}$ negative ends asymptotic to orbits of type $(-\tilde{k}_j,-\tilde{l}_j)$. The Riemann-Hurwitz formula shows that the domain of $\tilde u$ is a punctured sphere, so Proposition \ref{prop:indices} gives
$$
\mathrm{index}(\tilde{u}) = \tilde{s} -2 + 6\tilde{d} + 2\sum_{j=1}^{\tilde{s}} (\tilde{k}_i + \tilde{l}_i).
$$
Hence we have
\begin{equation}\label{eq:rh+index}
\mathrm{index}(u) = r\mathrm{index}(\tilde{u}) + 2(r-1) -(r \tilde{s} - s)\geq 2(r-1) -(r \tilde{s} - s).
\end{equation}
Let now $\phi:S^2\priv \Gamma\to S^2\priv \tilde \Gamma$ be the holomorphic (ramified) covering such that $u=\tilde u\circ \phi$. Removing singularities, $\phi$ extends to a holomorphic map $\Phi:S^2 \to S^2$ that sends $\Gamma$ to $\tilde \Gamma$. Then, $r \tilde s-  s$ represents the total ramification of $\phi$ over the points of $\Gamma$, so Riemann-Hurwitz formula gives
$$
r\tilde s-s=\sum_{c\in \Gamma}  (m_c-1) \leq \sum_{c\in S^2} (m_c-1)=2(r-1).
$$
By \eqref{eq:rh+index}, we see that $\mathrm{index}(u) \ge 0$ as required.
\end{proof}

By proposition \ref{prop:indices}, since $F_0$ has $T$ positive ends and a single negative end asymptotic to $\gamma_1^{3d-1}$ we have $\mathrm{index}(F_0) = 2T - 6d$.

\begin{lemma}\label{posin} $\mathrm{index}(F_0) \ge 0$, that is $T \ge 3d$.
\end{lemma}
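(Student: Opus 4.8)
\noindent\emph{Proof plan.} The global index relation does not suffice here: by continuity of the index (Proposition~\ref{prop:indglue}.3) applied to the neck--stretching of the rigid curves of Theorem~\ref{thm:exists} one has $\mathrm{index}(B)=0$, whence recursivity (Proposition~\ref{prop:indglue}.1) together with Lemma~\ref{posout} gives only $0=\mathrm{index}(F_0)+\sum_{i=1}^{T}\mathrm{index}(F_i)-T\ge\mathrm{index}(F_0)$, i.e. the \emph{reverse} inequality. So I would instead bound $\mathrm{index}(F_0)$ from below directly, from the geometry of its constituent curves; combined with the relation above, this will even force $T=3d$ and $\mathrm{index}(F_i)=1$ for all $i$.

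By its very definition $F_0$ has no component in $\CC\P^2\setminus U$: every constituent curve lies in $U\setminus\eps E$, in $S\partial E$, or in $S\partial U$. Expanding $\mathrm{index}(F_0)$ by Proposition~\ref{prop:indglue}, only the matchings along orbits of $\partial U$ contribute (those along the non--degenerate orbits of $\partial\eps E(1,S)$ being free, of dimension $0$); writing $M_U$ for their number, $\mathrm{index}(F_0)=\sum_u\mathrm{index}(u)-M_U$. Now the component of $F_0$ lying immediately above any such matching orbit is in $S\partial U$ (as $F_0$ avoids $\CC\P^2\setminus U$), and conversely every negative end of an $S\partial U$--curve of $F_0$ is matched (the only unmatched end of $B$ being $\gamma_1^{3d-1}\subset\partial\eps E$); hence $M_U=\sum_{w\subset S\partial U}s_-(w)$. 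By the maximum principle in $S\partial U$ each such $w$ has a positive end, so $\mathrm{index}(w)=2s_+(w)+s_-(w)-2\ge s_-(w)$ by Proposition~\ref{prop:indices}.b, while by Proposition~\ref{prop:indices}.e every $S\partial E$--component of $F_0$ has nonnegative index. Adding these up yields $\mathrm{index}(F_0)\ge\sum_{v\subset U\setminus\eps E}\mathrm{index}(v)$; and since $F_0$ is connected, meets $\partial\eps E$, and has its positive ends on $\partial U$, at least one curve $v$ in $U\setminus\eps E$ occurs. It therefore suffices to prove $\mathrm{index}(v)\ge0$ for every curve $v$ of $F_0$ in $U\setminus\eps E$.

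For a fixed $v\subset U\setminus\eps E$: if $v$ is somewhere injective it has nonnegative index for our generic almost complex structure (cylindrical near the ends). If $v=\tilde v\circ\phi$ is a multiple cover of a somewhere injective $\tilde v$, then $\mathrm{index}(\tilde v)\ge0$ for generic $J$, $\tilde v$ is again a punctured sphere by Riemann--Hurwitz, and the total ramification of $\phi$ over the punctures of $v$ is at most $2(\deg\phi-1)$; I would then feed this into the index formula of Proposition~\ref{prop:indices}.d — exactly as in the proof of Lemma~\ref{posout} — to obtain $\mathrm{index}(v)\ge(\deg\phi)\,\mathrm{index}(\tilde v)\ge0$. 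Together with the reduction above this gives $\mathrm{index}(F_0)\ge0$, i.e. $T\ge3d$.

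The hard part will be precisely this last step. Unlike the contribution $2(k_j+l_j)$ of an end in $\CC\P^2\setminus U$ in Proposition~\ref{prop:indices}.c, which is linear in the multiplicity and makes the Lemma~\ref{posout} computation go through, the contributions $-2\bigl(r_i+\lfloor r_i/S\rfloor\bigr)$ and $-2\bigl(t_j+\lfloor t_jS\rfloor\bigr)$ of the ends on $\partial\eps E$ in Proposition~\ref{prop:indices}.d are not, because of the floor terms, so a priori a multiple cover could drop below $(\deg\phi)\,\mathrm{index}(\tilde v)$. One must therefore control the negative asymptotics of the curves of $F_0$ in $U\setminus\eps E$: establishing that the $\gamma_1$--ends occur with multiplicity $<S$ (so that $\lfloor\cdot/S\rfloor=0$ and their contribution is linear after all), and ruling out covers of $\gamma_2$ — or else absorbing their effect into the Riemann--Hurwitz budget. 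This is where the real work of the lemma lies.
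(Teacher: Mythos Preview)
Your reduction is correct and clean: the bookkeeping with $M_U=\sum_{w\subset S\partial U}s_-(w)$ and Proposition~\ref{prop:indices}.b,e does give $\mathrm{index}(F_0)\ge\sum_{v\subset U\setminus\eps E}\mathrm{index}(v)$. The gap is exactly where you locate it, and it is a genuine one: the curve--by--curve claim $\mathrm{index}(v)\ge 0$ is \emph{false} for multiple covers in $U\setminus\eps E$, so the Lemma~\ref{posout} argument cannot be transplanted. For instance, if $\tilde v$ has two positive ends and one negative end on $\gamma_1$ (so $\mathrm{index}(\tilde v)=0$), the double cover branched over the two positive punctures has two positive ends and two negative ends on $\gamma_1$, hence $\mathrm{index}(v)=2\cdot 2-2-2-2=-2$. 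The floor terms are not even the culprit here; the problem is structural: in $\C\P^2\setminus U$ each end contributes a \emph{homological} quantity $2(k_j+l_j)$ that scales under covers, whereas in $U\setminus\eps E$ a positive end contributes a fixed $+2$ regardless of multiplicity, so branching over positive punctures loses index with nothing to compensate. Your proposed fixes (bounding $\gamma_1$--multiplicities by $S$, ruling out $\gamma_2$) do not address this.

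The paper does not try to prove $\mathrm{index}(v)\ge 0$ for all $v$. It argues by contradiction, and the hypothesis $T<3d$ is used twice in an essential way. First, since the positive ends of the curves $u_i$ sitting directly above the bottom $S\partial E$--piece $G_0$ connect (through $S\partial U$, by genus~$0$) to \emph{distinct} unmatched ends of $F_0$, one gets $\sum Q_i\le T\le 3d-1<S$; feeding this into the index formula for the underlying simple curves rules out $\gamma_2$--asymptotics. Second, positivity of area in $S\partial E$ forces $\sum q_i\ge 3d-1$, so by pigeonhole some $u$ has $q\ge Q$. This $u$ has negative index and hence is a cover; the paper then runs Riemann--Hurwitz counting ramification over the positive punctures \emph{and} over the distinguished negative puncture on $\gamma_1^{q}$, and it is precisely the extra inequality $q\ge Q$ that makes the count overflow $2(r-1)$. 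Without the contradiction hypothesis you have neither the bound $Q_i<S$ (so you cannot exclude $\gamma_2$) nor the pigeonhole inequality $q\ge Q$, and both are needed.
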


\begin{proof} We argue by contradiction and assume that $T<3d$, that is $F_0$ has less than $3d$ positive ends.
Let us consider the curves of $F_0$ in $S \partial E$ which fit together to form a connected component $G_0$ of $F_0$ including the lowest level curve with negative end asymptotic to $\gamma_1^{3d-1}$.
Then let $u_i:S^2\priv \Gamma_i\to U\priv \eps E(1,S)$ be the curve of $F_0$ with a negative end matching the $i$th positive end of $G_0$.

Note that by the maximum principle, each positive end of the $u_i$ is connected through components in $S\partial U$ to positive unmatched ends of $F_0$. And since our building is obtained by degenerating curves of genus $0$, these different positive ends of the $u_i$ are connected to distinct positive unmatched ends of $F_0$. Denoting by $Q_i$ the number of positive ends of $u_i$, we therefore get that $\sum Q_i\leq T\leq 3d-1$. This, in turn, guarantees that no negative end of the $u_i$ is asymptotic to a cover of $\gamma_2$. Indeed, a curve in $U\priv \eps E(1,S)$
is (possibly a multiple cover of) a somewhere injective curve $\tilde u$ is $U\priv \eps E$ with $s_+$ ends with index
$$
\ind(\tilde u)=2s_+-2-2\sum_{i=1}^{s_1^+} \left( r_i+\lfloor \frac {r_i}S\rfloor \right)  -2 \sum_{j=1}^{s_2^+} \left( t_j+\lfloor t_j S\rfloor \right)\geq 0.
$$  Since $s_+\leq 3d-1$ and $S>3d-1$, we see that the $t_j$ must vanish, so $\tilde u$ and therefore $u$ itself has no negative end asymptotic to a cover of $\gamma_2$.
Altogether, the curves $u_i$ therefore verify the following. They have $Q_i$ positive ends with $\sum Q_i= Q\leq 3d-1$, and they have a negative end asymptotic to a $\gamma_1^{q_i}$ which is matched with a positive end of $G_0$.
For area reasons, we then see that $\sum q_i\geq 3d-1$. In total, we therefore have
$$
\sum Q_i\leq 3d-1\leq \sum q_i,
$$
so there exists an $i$ such that $Q_i\leq q_i$. We henceforth denote this curve by $u$, and let $q$ and $Q$ the corresponding numbers; hence $q\geq Q$. The index of this curve is
$$
\ind(u)=2Q-2-2 \sum  \left( r_i+\lfloor \frac {r_i}S\rfloor\right)\leq 2Q-2-2q<0.
$$
Thus $u$ must be an $r$-covering of a somewhere injective curve $\tilde u:S^2\priv \tilde \Gamma\to U\priv \eps E(1,S)$ with say $\tilde Q\leq Q<3d$ positive ends, $\tilde s_-$ negative ends, the $i$-th of which is asymptotic to $\gamma_1^{r_i}$ (none of them is asymptotic to a cover of $\gamma_2$), and non-negative index given by

$$
\mathrm{index}(\tilde{u}) = 2\tilde{Q} -2 - \sum_{i=1}^{\tilde{s}^-} (2\tilde{r}_i + 2 \lfloor \frac{\tilde{r}_i}{S} \rfloor).
$$
Hence $\tilde{Q} \ge 1+ \sum \tilde{r}_i$. Suppose that the end of $u$ asymptotic to $\gamma_1^q$ covers $t$ times an end of $\tilde{u}$ asymptotic to $\gamma_1^{\tilde{q}}$, so $t\tilde{q}=q$. Consider also the ramified covering $\phi:S^2\priv \Gamma\to S^2\priv \Gamma$ defined by $u=\tilde u\circ \phi$, and remove its singularities to get a map $\phi:S^2\to S^2$. We recall that $\Gamma$ splits as $\Gamma^+\cup \Gamma^-$, where $\Gamma^+$ are the positive ends of $u$ and $\Gamma^-$ the negative ones. One of the negative ends, say $c_0$, has order $t$. The Riemann-Hurwitz formula, together with the facts that  $\tilde{Q} \ge 1+ \sum \tilde{r}_i$, $t\leq r$, and $q\geq Q$, 
then give:
$$
\begin{array}{ll}
2(r-1)=\sum_{c\in S^2}(m_c-1)&\geq \sum_{c\in \Gamma^+}(m_c-1)\; + (t-1)\\ & =r\tilde Q-Q+t-1\\
& \geq r(1+\sum \tilde r_i)-Q+t-1\\
 & \geq r(1+\tilde q)-Q+t-1\\
 & =r+r\tilde q-Q+t-1\\
 & = r+t\tilde q-Q+(r-t)\tilde q+t-1\\
 & \geq r+q-Q+r-t+t-1\\
 & \geq 2r-1,
\end{array}
$$
which is a contradiction.
%
\end{proof}

\end{subsection}

\begin{subsection}{Holomorphic planes in $\C^2\priv U$}
We recall the inclusions $L\subset U\subset B^4(R)$, with $U$ symplectomorphic to $U_{\eps,S}^p$. We consider here the situation where $L$ is the image by a Hamiltonian diffeomorphism of $\C^2$ of the product torus $L(1,x)$ with $x\geq 1$. We moreover assume in this paragraph that $x\in \Q$. The aim of this section is the following result:
\begin{lemma}\label{le:inC2}
Under the above hypothesis, for a generic almost complex structure and $s \ge 1$, there is no genus $0$ finite energy curve in $\C^2\priv U$ with $s$ negative ends, deformation index at least $s$ and area strictly less than $1$.
\end{lemma}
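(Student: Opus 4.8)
\emph{Plan.} I would argue by contradiction, extracting from such a $u$ a numerical constraint that is incompatible with $L$ being a product torus. Since a Hamiltonian diffeomorphism of $\C^2$ preserves the area and Maslov classes, after conjugating I may assume $L=L(1,x)$ itself, and I fix the basis $e_1,e_2$ of $H_1(L,\Z)$ with $\Omega(e_1)=1$, $\Omega(e_2)=x$, $\mu(e_1)=\mu(e_2)=2$. Suppose $u:\dot\Sigma\to \C^2\priv U$ is genus $0$ with $s\geq 1$ negative ends, $\ind(u)\geq s$ and $\area(u)<1$. Each negative end is asymptotic to a Reeb orbit on $\partial U$ which projects to a class in $H_1(L,\Z)$; let $(A,B)\in\Z^2$ be the sum of these classes over the $s$ ends.

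The first step is the bookkeeping. Since $u$ has image in $\C^2=\C\P^2(R)\priv S_\infty$ it has degree $0$, so Proposition \ref{prop:indices}.c) with $d=0$ gives $\ind(u)=s-2-2(A+B)$, and $\ind(u)\geq s$ becomes $A+B\leq -1$. For the area, I cap each negative end of $u$ by a disk lying in $U$ bounded by the limiting orbit; because $H_2(\C^2,\Z)=0$ the resulting sphere is null‑homologous, and unwinding the Stokes identity yields $\area(u)=\bigl(A+xB\bigr)-\sum_i\mathcal A(\gamma_i)$, where the $\mathcal A(\gamma_i)>0$ are the (small, $O(\eps)$) actions of the limiting Reeb orbits. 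In particular $A+xB=\area(u)+\sum_i\mathcal A(\gamma_i)>0$; and using that $x=p/q$ is rational (so $\Omega$ takes values in $\tfrac1q\Z$) together with $\eps$ small, $\area(u)<1$ forces $A+xB\leq 1$. Now $(x-1)B=(A+xB)-(A+B)>0-(-1)=1$: for $x=1$ this already reads $0>1$, a contradiction, while for $x>1$ it gives $B\geq 1$, hence $A\leq -1-B\leq -2$. Thus the class $C=(A,B)$ has one strictly positive and one strictly negative coordinate.

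The final, and genuinely substantial, step is to exploit that $L=L(1,x)$ is a product torus. It has minimal Maslov number $2$ and $\C^2$ carries no nonconstant holomorphic spheres, so the count of Maslov‑$2$ holomorphic disks through a generic point of $L$ is finite and independent of the tame compatible almost complex structure — in particular for the cylindrical‑near‑$\partial U$ generic $J$ in play — and for the product torus this count equals $1$ in the class $e_1$. Hence $L$ bounds a $J$‑holomorphic disk $D$ of area $1$ in class $e_1$ (and one, $D'$, of area $x$ in class $e_2$). Since $u$ and $D$ are distinct $J$‑holomorphic curves, positivity of intersections gives $u\cdot D\geq 0$; on the other hand $u\cdot D$ is a topological quantity, computed from the linking in $H_1(L,\Z)$ of the ends of $u$ with $\partial D$, and with the orientation conventions in force it equals $-B$. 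As $B\geq 1$ this is the desired contradiction. (Equivalently, pairing $u$ with $D'$ gives $A\geq 0$, contradicting $A\leq -2$.)

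I expect the last paragraph to be the main obstacle. The index and area identities are routine, and the arithmetic is immediate; the work is in (i) knowing that the product torus really does bound its standard holomorphic disk in class $e_1$ for the non‑standard (merely cylindrical near $\partial U$) almost complex structure at hand — i.e. the $J$‑independence of the relevant open Gromov–Witten number, which rests on the absence of disk and sphere bubbling for a minimal‑Maslov‑$2$ torus in $\C^2$ — and (ii) identifying the topological intersection number of $u$ with that disk, with the correct sign, even though $u$ has punctures on $\partial U$ rather than honest boundary; this requires a careful comparison of framings, along the lines of the index computations in Section \ref{sec:indices}.
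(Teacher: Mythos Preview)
Your bookkeeping step contains a sign slip. Reeb orbits of type $(k,l)\neq(0,0)$ are not contractible in $U\simeq T^*\T^2$, so you cannot ``cap each negative end of $u$ by a disk lying in $U$''; one caps by half-cylinders to $L$ as in Remark~\ref{areadefn}, obtaining a surface with boundary on $L$ in class $-(A,B)$, and then Stokes gives $\area(u)=-(A+xB)$, not $A+xB$. (Check: the degree-$0$ plane with one end on $\gamma_{-1,0}$ has $(A,B)=(-1,0)$ and area $1$.) Your conclusions $B\ge 1$, $A\le -2$ (and the immediate contradiction for $x=1$) happen to survive with the corrected inequalities $A+xB\in(-1,0)$ and $A+B\le -1$, but the chain you wrote is off.

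The substantive gap is the intersection step. The number $u\cdot D$, for $D$ a Maslov-$2$ $J$-disk with $\partial D\subset L$, is \emph{not} determined by the class $(A,B)$. If $\Sigma,\Sigma'$ are two cappings of $u$ with the same boundary curve on $L$, then $\Sigma-\Sigma'=\partial W$ is null-homologous, but since $D$ has boundary one gets $(\Sigma-\Sigma')\cdot D=-\,W\cdot\partial D$, which need not vanish; so the pairing with a disk depends on the surface and not just on $[\partial\Sigma]$. Even finiteness of $u\cap D$ is not automatic: the ends of $u$ sit on Reeb orbits in $\partial U$, and nothing prevents $D$ from crossing those orbits. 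What \emph{is} a homological invariant is the intersection of $u$ with the complex line $\{z_2=0\}$, which lies entirely in $\C^2\setminus U$ and satisfies $u\cdot\{z_2=0\}=\sum l_j=-B$; but that line is $J$-holomorphic only for the specific $J$ equal to $i$ near it. This is exactly the route the paper takes: Lemma~\ref{le:inC2st} proves the statement for that special $J$, and then a genuine moduli-space cobordism argument---using automatic transversality for immersed curves (Theorem~\ref{immersed}), codimension-$2$ control of singular curves (Theorem~\ref{regular}), the reduction to somewhere-injective curves (Lemma~\ref{wlogsi}), and a minimal-area/minimal-ends compactness analysis---propagates the conclusion to generic $J$. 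Your attempt to bypass that cobordism via the $J$-independent disk count is precisely where the proof breaks: the disk exists, but there is no well-defined pairing of it with $u$ that returns $-B$.
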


\begin{remark}\label{areadefn} To avoid complicating our formulas with terms of order $\epsilon$, we will define the {\it area} of a finite energy curve in $\C^2\priv U$ to be the area of the closed surface formed by topologically gluing a half-cylinder in $U$ to each end of $u$. The open end of the cylinder is asymptotic to a limiting Reeb orbit matching the corresponding end of $u$ and the boundary of the cylinder is a curve on $L$. Note that this cylinder is {\it symplectic}, so starting with a curve in $\C^2\priv U$ of positive genuine area, the area we define here still remains positive, and this is all we will care about in the sequel.
\end{remark}

Before proving Lemma \ref{le:inC2} we derive the property for the standard Lagrangian torus $L(1,x)\subset \C^2$ and some cylindrical almost complex structure.

\begin{lemma}\label{le:inC2st} Let $U$ be a \nbd of $L(1,x)\subset \C^2$ (with $x\geq 1$), symplectomorphic as before to $U_{\eps,S}^p$. Let $J$ be an almost complex structure on $\C^2\priv U$ cylindrical near $\partial U$ and such that $J$ coincides with the standard complex structure $i$ near the line $\{z_2=0\}$. Then, there is no genus $0$ finite energy curve in $\C^2\priv U$ with $s$ negative ends, deformation index at least $s$ and area strictly less than $1$.
\end{lemma}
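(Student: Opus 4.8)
The plan is to argue by contradiction, combining the index formula of Proposition \ref{prop:indices}(c) at degree $0$ with positivity of intersection against the line $\Sigma_0:=\{z_2=0\}$. Note that $\Sigma_0$ is disjoint from $L(1,x)$ since $x\ge 1>0$, hence disjoint from $U$ once $\eps$ is small enough; and it is $J$-holomorphic precisely because $J=i$ in a neighbourhood of it. So $\Sigma_0$ is a closed $J$-holomorphic curve in $\C^2\setminus U$, available for intersection-theoretic arguments.

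So suppose $u$ were such a curve. Since $u$ has finite area and lies in $\C^2\setminus U$, it has only negative ends, all on $\partial U$ (equivalently, it is a degree-$0$ holomorphic curve in $\C\P^2\setminus U$, so it misses the line at infinity). Capping each end with the symplectic half-cylinder in $U$ provided by Remark \ref{areadefn} yields a surface $\bar u$ representing a class $(p,q)\in H_2(\C^2,L;\Z)\cong H_1(L;\Z)\cong\Z^2$, where $(e_1,e_2)$ is the standard basis, on which $\Omega(e_1)=1$, $\Omega(e_2)=x$ and $\mu(e_1)=\mu(e_2)=2$. Since $\omega$ is exact, $\mathrm{area}(u)=\int_{\bar u}\omega=\Omega(p,q)=p+qx$; this is positive because $u$ is non-constant (it has $s\ge1$ ends) and the capping cylinders are symplectic, and it is $<1$ by hypothesis, so $0<p+qx<1$. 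Plugging $d=0$ into Proposition \ref{prop:indices}(c) gives $\mathrm{index}(u)=s-2+2(p+q)$, so the hypothesis $\mathrm{index}(u)\ge s$ forces $p+q\ge1$. Finally, the capping cylinders lie in $U$ and so are disjoint from $\Sigma_0$, whence $u\cdot\Sigma_0=\bar u\cdot\Sigma_0$; the latter is the algebraic count of zeros on $\bar u$ of the function $z_2$, which is nowhere zero on $\partial\bar u\subset L$, so by the argument principle it equals the winding number of $z_2$ along $\partial\bar u$, namely the $e_2$-component $q$. As $u$ and $\Sigma_0$ are both $J$-holomorphic and $u$ is not contained in $\Sigma_0$ (it has an end on $\partial U$, while $\Sigma_0\cap U=\emptyset$), positivity of intersections gives $u\cdot\Sigma_0\ge0$, that is $q\ge0$.

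To conclude: if $q=0$ then $p+q\ge1$ forces $p\ge1$, so $\mathrm{area}(u)=p\ge1$, contradicting $\mathrm{area}(u)<1$; and if $q\ge1$ then, since $x\ge1$, $\mathrm{area}(u)=p+qx\ge p+q\ge1$, again a contradiction. Hence no such $u$ exists. The delicate points are purely bookkeeping: fixing orientation conventions consistently among the symplectic area, the class $(p,q)$, and the intersection number, so that positivity of intersection yields $q\ge0$ rather than $q\le0$ (with the opposite sign the final arithmetic fails for $q\le -1$); and checking that Proposition \ref{prop:indices}(c) applies verbatim at degree $0$ and to a possibly multiply covered $u$ — which is harmless, since that formula is just the Fredholm index, independent of genericity, and positivity of intersection with $\Sigma_0$ only requires that $u$ and $\Sigma_0$ share no component.
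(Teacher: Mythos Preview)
Your proof is correct and follows essentially the same approach as the paper's: both combine the degree-$0$ index formula from Proposition \ref{prop:indices}(c) to obtain $p+q\ge 1$ (the paper writes this as $\sum k_j+l_j\ge 1$), invoke positivity of intersection with the $J$-holomorphic line $\{z_2=0\}$ to get $q\ge 0$, and then use $x\ge 1$ to conclude $\area(u)=p+qx\ge 1$. The paper phrases the last step as the one-line estimate $\area(u)=\sum(k_j+l_j)+(x-1)\sum l_j\ge 1$, while you split into the cases $q=0$ and $q\ge 1$; these are the same computation. One small wording issue: $\Sigma_0=\{z_2=0\}$ is a plane, not a closed curve, but this does not affect the argument since positivity of intersection only requires both curves to be $J$-holomorphic near their (isolated) intersection points, and the ends of $u$ stay near $\partial U$, away from $\Sigma_0$.
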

\noindent{\it Proof:} Let $u:\C\P^1\priv\{z_1,\dots,z_s\}\to \C^2\priv U$ be a genus $0$ $J$-holomorphic curve with finite energy and index at least $s$.
Its ends are asymptotic to orbits of type $(-k_j, -l_j)$ as in section \ref{sec:lemma} where we now use the standard basis of $H_1(L(1,x), \Z)$.
Then,
$$
\ind(u)=s-2+2\sum_{j=1}^s (k_j+l_j)\geq s
$$
 so $\sum k_j+l_j\geq 1$. On the other hand,
$$
\begin{array}{ll}
\area (u)=\sum_{j=1}^s k_j +l_jx &= \sum_{j=1}^sk_j+l_l+l_j(x-1) \\
 & \geq 1+(x-1)\sum_{j=1}^s  l_j.
\end{array}
$$
 Now the sum of the $l_j$'s represents the intersection number between $u$ and $\{z_2=0\}$ (parameterized in the obvious way by the $z_1$-coordinate). Since $J=J_\st$ near this line, and $u$ is holomorphic, each intersection point between these two curves count positively, so $\sum l_j\geq 0$. Since $x\geq 1$, we indeed get $\area(u)\geq 1$. \cqfd

\noindent{\it Proof of lemma \ref{le:inC2}:}
Arguing by contradiction, since there exists a Hamiltonian diffeomorphism $f$ mapping $L(1,x)$ to $L$, if such an almost-complex structure and finite energy curve exists then we can pull-back using $f$ to find holomorphic curves of area less than $1$ asymptotic to a neighborhood $U$ of the product torus. Hence it suffices to work with  $L=L(1,x)$. By Lemma \ref{le:inC2st} we can find at least one almost-complex structure $J_1$ for which no such curves exist. Moreover, since the only constraints on $J_1$ appear near $\{z_2=0\}$ we may assume any genericity properties of $J_1$ with respect to curves asymptotic to $\partial U$.

We need two facts about moduli spaces of finite energy curves in $\C^2\priv U$. We denote by ${\cal J}$ the collection of compatible almost-complex structures on $\C^2\priv U$.

\begin{theorem}[Ivrii, \cite{ivrii}, section 2.4, Wendl, \cite{wendl}.]\label{immersed} Given $J \in {\cal J}$, immersed $J$-holomorphic finite energy curves with index at least the number of negative ends are regular. That is, the normal Cauchy-Riemann operator is surjective and our curves appear in a family of the expected dimension. Such curves are also regular in their moduli space of curves with fixed asymptotic limits (rather than allowing the limits to move in the family of Reeb orbits).
\end{theorem}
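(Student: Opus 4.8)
The plan is to reproduce, in the present setting, Wendl's four-dimensional \emph{automatic transversality} argument~\cite{wendl}, following Ivrii's treatment~\cite{ivrii} of curves with ends on the boundary of a Weinstein neighbourhood of a Lagrangian. The first step is the tangent--normal splitting of the linearization. Since $u$ is immersed and $\C^2\priv U$ is real four-dimensional, $u$ carries a genuine normal bundle $N_u$, a complex line bundle over the punctured domain $\dot\Sigma$, fitting into a short exact sequence $0\to T\dot\Sigma\to u^*T(\C^2\priv U)\to N_u\to 0$ in which the exponential asymptotic weights at the punctures are chosen to match the behaviour of $u$. This decomposes the linearized Cauchy--Riemann operator $D_u$, modulo a compact term, into a tangential operator $D_u^T$ acting on sections of $T\dot\Sigma$ and a normal operator $D_u^N$ acting on sections of $N_u$. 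The tangential operator is a $\overline\partial$-type operator on the weighted tangent bundle of a punctured sphere and is always surjective, its kernel and cokernel being absorbed by the reparametrizations and the Teichm\"uller deformations already counted in $\ind(u)$. Hence $u$ is Fredholm regular --- and, passing to the stronger weights at the relevant punctures, regular for the problem with fixed asymptotic limits as well --- if and only if the corresponding $D_u^N$ is surjective.

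So everything reduces to surjectivity of a real-linear Cauchy--Riemann operator on a complex line bundle over a punctured sphere, and here I would invoke Wendl's surjectivity criterion. With $g$ the genus of $\dot\Sigma$ and $\#\Gamma_{\mathrm{even}}$ the number of punctures whose asymptotic Reeb orbit is even (in the Morse--Bott case, even/odd referring to the parity of the integer part of the Robbin--Salamon index in the chosen trivialization), $D_u^N$ is surjective as soon as $\ind(u)>2g-2+\#\Gamma_{\mathrm{even}}$; since $\dot\Sigma$ has genus $g=0$ this reduces to
$$
\ind(u)\;\ge\;\#\Gamma_{\mathrm{even}}-1 .
$$
To close the argument I would note that every puncture of a finite energy curve in $\C^2\priv U$ is a negative end asymptotic to a Reeb orbit on $\partial U$, because $\partial U$ is a concave boundary component of $\C^2\priv U$; hence the number of punctures equals the number $s$ of negative ends, so in particular $\#\Gamma_{\mathrm{even}}\le s$. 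Together with the hypothesis $\ind(u)\ge s$ this yields $\ind(u)\ge s\ge\#\Gamma_{\mathrm{even}}\ge\#\Gamma_{\mathrm{even}}-1$, which is Wendl's inequality. Therefore $D_u^N$ is surjective, $u$ is regular, and the implicit function theorem produces the asserted families of nearby immersed curves of the expected dimension, both with moving and with fixed asymptotic limits.

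The points that require care are all bookkeeping: reconciling the weight and trivialization conventions so that the index entering Wendl's criterion is exactly the one computed in Proposition~\ref{prop:indices} (in particular matching the Morse--Bott contributions $\frac12\dim S$ on the two sides), and checking that the criterion is applied in a regime it covers --- immersed curves, with branching concentrated at the punctures in the multiply covered case, which is precisely the situation arising here. The only genuine inequality, $\#\Gamma_{\mathrm{even}}\le s\le\ind(u)$, is immediate and needs no finer parity analysis; the substantive input is Wendl's theorem, and the step I would be most careful about is the conventions check.
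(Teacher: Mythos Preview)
The paper does not supply its own proof of this statement: it is quoted as a result from the literature (Ivrii's thesis and Wendl's automatic transversality paper) and used as a black box in the compactness argument that follows. So there is nothing in the paper to compare your proposal against.

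That said, your sketch is a faithful outline of what those references actually do. The tangent--normal splitting for immersed curves in real dimension four, the absorption of the tangential operator into reparametrizations and Teichm\"uller deformations, and the reduction to Wendl's numerical criterion on the normal line bundle are exactly the ingredients. Your inequality $\ind(u)\ge s\ge\#\Gamma_{\mathrm{even}}$ is more than enough to trigger the genus-zero criterion $\ind(u)>\#\Gamma_{\mathrm{even}}-2$, and you are right that no finer parity analysis is needed. The Morse--Bott bookkeeping you flag is real but is handled in the cited sources; in fact in this particular setting the Reeb orbits on $\partial U$ all have half-integer Robbin--Salamon index $\tfrac12$ in the trivialization used in Proposition~\ref{prop:indices}, so after the standard perturbation conventions the negative ends turn out to be odd and $\#\Gamma_{\mathrm{even}}=0$ --- stronger than what you used, but your cruder bound already suffices.
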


\begin{theorem}[see Zehmisch, \cite{zeh}, Oh and Zhu, \cite{ohzhu}]\label{regular} There exists a subset of ${\cal J}_I \subset {\cal J}$ of the second category such that if $J \in {\cal J}_I$, in any moduli space of somewhere injective $J$-holomorphic curves the collection of singular (that is, non-immersed) curves form a stratified subset of codimension $2$.
\end{theorem}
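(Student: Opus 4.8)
\noindent\textit{Proof sketch (following Oh--Zhu, \cite{ohzhu}, and Zehmisch, \cite{zeh}).} This is a Thom--Boardman type transversality statement for pseudoholomorphic maps, and the plan has four steps: a local normal form for critical points, the construction of universal moduli spaces carrying jet constraints, a Sard--Smale dimension count, and passage to a $C^\infty$ statement. First the local picture. If $u$ is a non-constant $J$-holomorphic curve, the $\mathbb C$-linear part $\partial_J u$ of $du$ is a section of a complex rank~$2$ bundle over the domain satisfying a Cauchy--Riemann type equation, so by the similarity principle (Micallef--White, Aronszajn--Carleman) its zeros are isolated and each zero $z_0$ carries a well-defined order $\mathrm{ord}_{z_0}(u) = k \ge 1$: in suitable local coordinates $\partial_J u(z) = c\,z^k + O(|z|^{k+1})$ with $c \in \mathbb C^2 \setminus \{0\}$. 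Hence $u$ is immersed precisely when it has no critical point, and the non-immersed locus of a moduli space $\mathcal M^*_J$ of somewhere injective curves is filtered by the number of critical points and their orders.

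Next, the universal set-up. Enlarge $\mathcal J$ to a Banach manifold of almost complex structures of class $C^\ell$ ($\ell$ large, to be removed later). For integers $k \ge 1$ and $m \ge 1$, let $\widetilde{\mathcal M}_{k,m}$ be the space of tuples $\big(u,(z_1,\dots,z_m),J\big)$ with $u$ somewhere injective and $J$-holomorphic, the $z_i$ distinct interior points of the domain, and $\mathrm{ord}_{z_i}(u) \ge k$ for every $i$. The core claim is that $\widetilde{\mathcal M}_{k,m}$ is a Banach manifold, i.e.\ that the linearization of $\bar\partial_J u = 0$ together with the $(k-1)$-jet constraints at the $z_i$ and the variation of $J$ is surjective. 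Because $u$ is somewhere injective one may assume, possibly after recording an auxiliary injective marked point, that each $u(z_i)$ is an injective value of $u$; a local computation then shows that compactly supported variations of $J$ near $u(z_i)$ realize every prescribed holomorphic $(k-1)$-jet of $\partial_J u$ at $z_i$, which gives the surjectivity. This is exactly the jet transversality theorem of Oh--Zhu, and it is the technical heart of the argument; the constraints are in the interior, away from the punctures, so the presence of asymptotic Reeb ends (and the Morse--Bott families) does not interfere.

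Now the dimension count. The projection $\widetilde{\mathcal M}_{k,m} \to \mathcal J$ is Fredholm, so for $J$ in a residual set $\mathcal J_{k,m}$ the fibre is a manifold. Since we are in dimension $4$, requiring $\mathrm{ord}_{z_i}(u) \ge k$ at a marked point costs $2 \cdot 2k = 4k$ real conditions while the marked point itself contributes $2$ dimensions, so $\dim \widetilde{\mathcal M}^J_{k,m} = \dim \mathcal M^*_J + m(2 - 4k)$. For $(k,m) = (1,1)$ this equals $\dim \mathcal M^*_J - 2$; since the order-$\ge 2$ locus and the two-critical-point locus are then of strictly higher codimension, a generic curve possessing a critical point has exactly one, of order $1$, and forgetting the marked point is a local diffeomorphism onto the non-immersed locus. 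Thus that locus is a submanifold of codimension $2$ away from closed strata of codimension $\ge 4$. Adjoining the standard somewhere-injective transversality (so that $\mathcal M^*_J$ itself is a manifold) and intersecting the countably many residual sets obtained by letting $(k,m)$ and the discrete data — homology/relative homology class, number and types of punctures — vary, we obtain a residual set $\mathcal J_I^{C^\ell} \subset \mathcal J^{C^\ell}$ with the asserted property; by Taubes' argument the corresponding set $\mathcal J_I$ of $C^\infty$ structures is residual (of the second category) in the $C^\infty$ topology.

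The principal obstacle is the jet transversality in the second step: unlike ordinary transversality for somewhere injective curves, the constraints here are concentrated at isolated points, so one must control precisely which jets of $\partial_J u$ at a critical value are attainable by compactly supported perturbations of $J$. Verifying this — essentially, that the jet-evaluation map into the appropriate bundle of Thom--Boardman strata is, after varying $J$, a submersion — is the content of the analyses of Oh--Zhu and Zehmisch that we invoke.
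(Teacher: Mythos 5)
The paper does not actually prove Theorem \ref{regular}: it is quoted from Zehmisch \cite{zeh} and Oh--Zhu \cite{ohzhu}, with only the remark that those papers treat closed curves, that \cite{zeh} generalizes directly to the punctured setting, and that the dimension $0$ or $1$ case also appears in \cite{wendl}. Your sketch is a faithful reconstruction of exactly the cited argument --- isolation and order of critical points via the similarity principle, universal moduli spaces with jet constraints, Oh--Zhu jet transversality at injective values, the Sard--Smale count giving codimension $4k-2$ (hence $2$ for a single simple critical point), and Taubes' argument to pass from $C^\ell$ to $C^\infty$ --- so it matches what the paper relies on. The two points you compress, namely surjectivity when a critical point happens to lie over a non-injective point of $u$ and the non-interference of the asymptotic Reeb ends (critical points cannot accumulate at the punctures because of the asymptotic behaviour of finite-energy curves), are precisely the technical content delegated to \cite{ohzhu} and \cite{zeh}, and identifying them as such is appropriate for a proof sketch of a quoted theorem.
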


The papers \cite{zeh} and \cite{ohzhu} are concerned with compact holomorphic curves, but \cite{zeh} generalizes directly. A proof that moduli spaces of dimension $0$ or $1$ generically contain only immersed curves also appears in \cite{wendl}, and this case will be our main focus.

Let ${\cal J}_R \subset {\cal J}$ be the collection of almost-complex structures on $\C^2\priv U$ which are regular for all of the (countably many) moduli spaces of finite energy curves. We will work with ${\cal J}_0 = {\cal J}_R \cap {\cal J}_I \subset {\cal J}$, which is again a set of the second category.

Given this, we define $\ca$ to be the infimum of the areas of $J$-holomorphic curves having index at least the number of negative ends and $J \in {\cal J}_0$. By contradiction we are assuming $\ca<1$. But since $x \in \Q$ there are only finitely many possible areas less than $1$ which can be realized by holomorphic curves and so $\ca>0$ is realized by, say, a $J_0$-holomorphic curve $u$. Amongst all choices for $u$ we choose a curve with the minimal number of negative ends $s$. We may further assume that $u$ is somewhere injective from the following lemma.

\begin{lemma}\label{wlogsi} Let $v$ be a finite energy curve in $\C^2\priv U$ with $t$ negative ends and $\ind{v} \ge t$. Suppose that $v$ is a multiple cover of a curve $\tilde{v}$ with $\tilde{t}$ negative ends. Then $\ind(\tilde{v}) \ge \tilde{t}$.
\end{lemma}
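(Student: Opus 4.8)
The plan is to transcribe, in the simpler setting of $\C^2\setminus U$ where every puncture is negative, the branched-cover computation already carried out in the proof of Lemma~\ref{posout}. Write $v=\tilde v\circ\phi$ with $\phi\colon S^2\setminus\Gamma\to S^2\setminus\tilde\Gamma$ the holomorphic branched covering realizing $v$ as a multiple cover of its somewhere injective underlying curve $\tilde v$, where $|\Gamma|=t$ and $|\tilde\Gamma|=\tilde t$. Removing singularities, $\phi$ extends to a degree-$r$ branched self-cover $\Phi\colon S^2\to S^2$. Since $v$ is a finite energy curve whose ends are exactly the points of $\Gamma$ and $\Phi$ is non-constant, hence surjective, one gets $\Phi^{-1}(\tilde\Gamma)=\Gamma$; and the Euler characteristic count for branched covers (Riemann--Hurwitz) forces $\tilde v$ to have genus $0$, so that the index formula of Proposition~\ref{prop:indices}.c) applies to $\tilde v$ as well.

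The first step is to record how the asymptotics transform: if $m_c$ denotes the local degree of $\Phi$ at $c\in\Gamma$ and the end of $\tilde v$ at $\tilde c\in\tilde\Gamma$ has type $(-\tilde k_{\tilde c},-\tilde l_{\tilde c})$, then the end of $v$ at $c$ has type $(-m_c\tilde k_{\Phi(c)},-m_c\tilde l_{\Phi(c)})$, whence, using that $\sum_{c\in\Phi^{-1}(\tilde c)}m_c=r$ for every $\tilde c$,
$$
\sum_{c\in\Gamma}(k_c+l_c)\ =\ r\sum_{\tilde c\in\tilde\Gamma}(\tilde k_{\tilde c}+\tilde l_{\tilde c}).
$$
Next I would feed this into the index formula in its $\C^2\setminus U$ form $\ind(w)=(\#\,\text{ends of }w)-2+2\sum(k+l)$, as used in the proof of Lemma~\ref{le:inC2st}. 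Eliminating $\sum(\tilde k+\tilde l)$ between the formulas for $\ind(v)$ and $\ind(\tilde v)$ yields
$$
\ind(v)\ =\ r\,\ind(\tilde v)+2(r-1)-(r\tilde t-t),
$$
which is exactly the ($d=0$ specialization of the) identity \eqref{eq:rh+index}. Rearranging and using the hypothesis $\ind(v)\ge t$ gives $r\,\ind(\tilde v)=\ind(v)+(r\tilde t-t)-2(r-1)\ge t+(r\tilde t-t)-2(r-1)=r\tilde t-2(r-1)$, so $\ind(\tilde v)\ge\tilde t-2+2/r>\tilde t-2$, hence $\ind(\tilde v)\ge\tilde t-1$.

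The final step closes the remaining unit by parity: the same index formula shows $\ind(\tilde v)\equiv\tilde t\pmod 2$, so $\ind(\tilde v)=\tilde t-1$ cannot occur and therefore $\ind(\tilde v)\ge\tilde t$, as claimed. I do not expect a real obstacle here, since the argument is a direct and slightly simpler copy of the computation in Lemma~\ref{posout}; the only point worth checking carefully is the bookkeeping identity $\Phi^{-1}(\tilde\Gamma)=\Gamma$, which guarantees both that every puncture of $\tilde v$ is covered and that the asymptotic scaling relation above is exact — and this is precisely where the finite energy hypothesis on $v$ enters.
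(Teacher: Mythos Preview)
Your proposal is correct and follows essentially the same route as the paper: both derive the identity $\ind(v)=r\,\ind(\tilde v)+2(r-1)-(r\tilde t-t)$ from the index formula (equivalently, $\ind(v)=t-2+r(\ind(\tilde v)-\tilde t+2)$), use $\ind(v)\ge t$ to get $\ind(\tilde v)-\tilde t\ge \tfrac{2}{r}-2>-2$, and then close with parity. The Riemann--Hurwitz and $\Phi^{-1}(\tilde\Gamma)=\Gamma$ bookkeeping you include is harmless but not needed here; the paper's version simply substitutes the index formulas directly without invoking the branched-cover apparatus from Lemma~\ref{posout}.
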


\begin{proof} Suppose the ends of $\tilde{v}$ are asymptotic to orbits of type $(-\tilde{m}_i, -\tilde{k}_i)$ and the cover is of degree $r$. Then we have
$$\ind(\tilde{v}) = \tilde{t} -2 + 2\sum_{i=1}^{\tilde{t}}( \tilde{m}_i + \tilde{k}_i)$$ and
$$\ind(v) = t -2 + 2r\sum_{i=1}^{\tilde{t}}( \tilde{m}_i + \tilde{k}_i)$$
$$=t-2 + r(\ind(\tilde{v}) - \tilde{t} +2).$$
Thus $$ r(\ind(\tilde{v}) - \tilde{t} +2) -2 \ge 0$$ and $$ \ind(\tilde{v}) - \tilde{t} \ge \frac{2}{r}-2 >-2.$$ As $ \ind(\tilde{v}) - \tilde{t}$ is even this gives our inequality as required. \cqfd


Now, by the definition of ${\cal J}_0$ the $J_0$-holomorphic curve $u$ is regular, and by Theorem \ref{immersed} we may further assume it is immersed. Denote its asymptotic limits by $\sigma_1, \dots, \sigma_s$. In the case when $\ind(u)>s$ we also fix $N= \frac{1}{2}(\ind(u) -s)$ points $p_1, \dots, p_N$ in the range of the injective points of $u$ (recall from Proposition \ref{prop:indices}.c) that $N$ must be an integer).

Let $J_t$, $0 \le t \le 1$ be a family of almost-complex structures interpolating between $J_0$ and $J_1$ and define the universal moduli space
$$
\cm:= \left \{   \begin{array}{ll}

(t,u) \quad \text{such that} & u:\C\P^1\priv \{z_1,\dots,z_s\}\to \C^2\priv U,\\ & \overline\partial_{J_t}u=0,\\
 & u \text{ somewhere injective} \\
  &\text{image}(u)\cap p_i \neq \emptyset \quad \text{for all} \quad i,\\
   & u \text{ is asymptotic to } \sigma_i \text{ at }z_i
  \end{array}\right\} \slash \sim
  $$
where we quotient by reparameterizations of the domain. We note that $\cm$ has virtual dimension $1$. Indeed, fixing the asymptotic limits reduces the virtual dimension by $s$, and the fixed points further reduce the dimension by $2N$.

Similarly to the above we assume that $\{J_t \}$ is regular for $\cm$ (so $\cm$ has dimension $1$) and further is regular for all unconstrained moduli spaces of somewhere injective finite energy curves. We also assume $\{J_t \}$ is generic in the sense of Theorem \ref{immersed} for singular curves, so they all appear in our moduli spaces only in codimension $2$ (and hence not at all in $\cm$).

Given all of this, since curves in $\cm$ are immersed they are also regular by Theorem \ref{regular} (which also holds when the index is a constrained index for curves passing through fixed points). Therefore the map $\cm \to [0,1]$ is a submersion. By Lemma \ref{le:inC2st} the fiber over $1$ is empty and so we will arrive at a contradiction if we can show $\cm$ to be compact.

To this end, let $(t_n, u_n) \in \cm$ and suppose that $t_n \to t_{\infty}$. We claim that a subsequence of the $u_n$ converges to a $J_{t_{\infty}}$-holomorphic finite energy plane $u_{\infty}$ such that $(t_\infty, u_\infty) \in \cm$.
By \cite{BEHWZ},  some subsequence of $u_n$ converges in a suitable sense to a $J_{t_\infty}$-holomorphic building. This building consists of top level curves in $\C^2\priv U$ and lower level curves in $S\partial U$. For simplicity we will gather lower level curves with matching asymptotics and consider them as a single curve. We can also assume by adding trivial lower cylinders that there are no unmatched negative ends of the top level curves. Hence, there are $s$ unmatched negative ends to the lower level curves which are asymptotic to $\sigma_1, \dots, \sigma_s$. Let $L_\top$  be the number of top level curves and $L_\low$ the number of low level components (after our identifications). We call $I_i,s_i$, $i=1,\dots ,L_\top$ the  (unconstrained) index and number of negative ends of the $i$-th top level curve, and $(J_i,r_i,t_i)$, $i=1,\dots,L_\low$ the index, number of positive and number of negative ends of the $i$-th low-level component. Then by proposition \ref{prop:indices}.b),
 $$
 J_i=2r_i+t_i-2.
 $$
 Since all ends of the upper level curves match,
 $$
 \sum_{i=1}^{L_\top} s_i=\sum_{i=1}^{L_\low} r_i,
 $$
 and since the total number of unmatched ends is $s$,
 $$
 \sum_{i=1}^{L_\low} t_i=s.
 $$
 We can associate a graph to a holomorphic building by adding a vertex for each curve and asymptotic limit (just one vertex when an asymptotic limit is a matching asymptotic between two curves), and an edge between the vertex for each curve and its asymptotics. As we take limits of curves of genus $0$, this graph is a tree, hence has Euler characteristic $1$, so we have
 $$
 L_\top+L_\low+\sum_{i=1}^{L_\top}s_i+\sum_{i=1}^{L_\low} t_i-\sum_{i=1}^{L_\top}s_i-\sum_{i=1}^{L_\low}(r_i+t_i)=L_\top+L_\low-\sum_{i=1}^{L_\low} r_i=1.
 $$
 Finally, the index of the limiting building, namely the sum of the indices of the constitutent curves minus matching conditions, equals the index of the original curve, which by assumption is at least $s$. Thus, taking all previous equalities into account,
 $$
 \begin{array}{ll}
\ds s\leq  \sum_{i=1}^{L_\top}I_i+\sum_{i=1}^{L_\low}J_i-\sum_{i=1}^{L_\top} s_i& \ds =\sum_{i=1}^{L_\top} (I_i-s_i)+\sum_{i=1}^{L_\low}  (2r_i+t_i-2)\\ &\ds =\sum_{i=1}^{L_\top} (I_i-s_i)+2(L_\top+L_\low-1)+s-2L_\low\\
  &\ds  =s+2(L_\top-1)+\sum_{i=1}^{L_\top} (I_i-s_i) \\
  & \ds =s-2+\sum_{i=1}^{L_\top} (I_i-s_i+2)
\end{array}
 $$
 Thus,
 $$
 \sum_{i=1}^{L_\top} (I_i-s_i+2)\geq 2.
 $$
By proposition \ref{prop:indices}.c) the differences $I_i-s_i$ are even, so at least one of them is  non-negative, and the corresponding top level curve $v$ therefore has index at least the number of its negative ends. As it appears as part of the limit, the area of $v$ is at most the common area $\ca$ of the curves $u_n$. By Lemma \ref{wlogsi} we may assume $v$ is somewhere injective and our assumptions on $\{J_t \}$ imply the curve $v$ appears in a universal moduli space containing some immersed curves. By Theorem \ref{regular} such an immersed curve is regular and so persists if we deform to an almost-complex structure $J \in {\cal J}_0$, and by minimality of $\ca$ among the areas of such curves, the area of $v$ is at least $\ca$ and hence $\ca$ exactly. Thus there is exactly one top level curve (as $v$ occupies all of the area) which must therefore intersect the points $p_i$. By minimality of $s$ we see that $v$ has at least $s$ negative ends, but as the limit is of curves of genus $0$ and there are no holomorphic planes in $U$ (there are no contractible Reeb orbits) the curve $v$ must have exactly $s$ negative ends. It follows that the lower level curves are cylinders which for action reasons must be trivial cylinders asymptotic to the $\sigma_1, \dots, \sigma_s$. We conclude that $v$ has the correct asymptotics and $(t_{\infty}, v) \in \cm$ as required.
\end{proof}

\end{subsection}

\end{section}

\begin{section}{Proof of the obstruction part of theorem \ref{thm:main}}\label{sec:proof}

We now gather together the information from the previous section to prove the obstructions claimed in theorem \ref{thm:main}.

\paragraph{Some restrictions on the $F_i$:}
 By Lemmas  \ref{posout} and \ref{posin}, the component $F_0$ has $\mathrm{index}(F_0) \ge 0$ and the $F_i$ for $1 \le i \le T$ have $\mathrm{index}(F_i) \ge 1$. As there are $T$ remaining ends to match and the sum of the indices minus matching is $0$, we conclude that $\mathrm{index}(F_0) = 0$ (and hence $T=3d$) and $\mathrm{index}(F_i) = 1$ for $i \ge 1$.

By proposition \ref{prop:indices}, the index equality for the $F_i$ with $i \ge 1$ says
\begin{equation}\label{eq:indfi}
1=\mathrm{index}(F_i)=6d_i + 2(m_i + k_i) -1,
\end{equation}
 where $F_i$ has total degree $d_i$ and is asymptotic to orbits of type $(-m_i, -k_i)$.
Meanwhile the action in $B(R)$ of orbits of type $(-m_i,-k_i)$ is $m_i + k_i x$ (recalling Remark \ref{areadefn}).
Therefore by Stokes' Theorem we have $$\area(F_i) = Rd_i +  (m_i + k_i x).$$
In view of \eqref{eq:indfi} we get
$$
\area(F_i) = \big(R-3 \big)d_i + (k_i(x-1) +1).
$$

\paragraph{The case $x\ge 2$ (proof of theorem \ref{thm:mainobs}):}
We argue by contradiction, and assume that $R<3$ and $x \ge 2$. As $\mathrm{area}(F_i) >0$ this means $k_i \ge 0$. But as all limiting orbits $\gamma_{-m_i, -k_i}$ bound the component $F_0$ in $U$ they represent a trivial homology class, and so $\sum m_i = \sum k_i =0$. Thus $k_i =0$ for all $i$ and the $3d$ components $F_i$ are all asymptotic to orbits of type $(-m_i,0)$.

Now we use Stokes' Theorem to calculate the area of $F_0$ in $U$. This has $3d$ positive ends asymptotic to orbits $\gamma_{-m_i,0}$ and a negative end on $\gamma_1^{3d-1}$. Hence from our description of $U$ in section \ref{sec:gefr} we get
$$
\mathrm{area}(F_0) = \sum_i |m_i| \frac{\epsilon}{2} - (3d-1)\epsilon.
$$
As $\sum m_i =0$ we have $\sum_{m_i >0} m_i \ge 3d-1$. Focus for a moment on the components $F_i$ with $m_i>0$. By the index formula \eqref{eq:indfi} their index is $1=6d_i+2m_i-1$, so their degrees $d_i$ vanish, while  $m_i=1$. Since the sum of these degrees is at least $3d-1$, there are exactly $3d-1$ such components (since there are $3d$ components in total, and one of them at least must have $m_i<0$).
The final picture for the building is therefore a component $F_0$ with $3d$ positive ends, $3d-1$ of which are asymptotic to orbits $\gamma_{-1,0}$ that match with components $F_i$ of degree $0$, and one positive end asymptotic to an orbit $\gamma_{3d-1,0}$ that matches with a component (say $F_{3d}$) of degree $d$. The positive area of this last component is $Rd-(3d-1)$,
so $R>3-\nf 1d$.
Taking $d$ large gives us the result.



\paragraph{The case $x<2$:} We again argue by contradiction, and assume now that $R<1+x$. Note that by Weinstein's \nbd theorem, it is enough to prove our result for $x\in \Q$. Note also that if all $k_i$ vanish, the same proof as the above shows that $R\geq 3$, which is already a contradiction. Hence, there must be planes asymptotic to orbits of type $(m,k)$ with $k\neq 0$, and in particular there must be such a plane $F$ asymptotic to an orbit $\gamma_{m,k}$ with $k<0$. Let $d$ be the degree of $F$. Then,
$$
1=\ind(F)=6d+2(m+k)-1
$$
and
$$
\area(F)=Rd+(m+kx).
$$
From the first equation we get $m=1-3d-k$, and substituting in the second one gives
$$
\area(F)=Rd+(1-3d-k)+kx=(R-3)d+k(x-1)+1>0.
$$
Since $k\leq -1$, $x <2$ and $R < 1+x$, we get
$$
(2-x)d<2-x
$$
and so $d=0$. Thus $F$ is a plane with one negative end, has index $1$, lies in $B^4(R)\priv U\subset \C^2\priv U$, and verifies $m=1-k$, so $\area(F)=1+k(x-1)<1$. This is in contradiction with Lemma \ref{le:inC2} (recall that $x\in \Q$).\cqfd
\end{section}

\begin{section}{Proof of the obstruction part of theorem \ref{thm:polyemb}.}\label{sec:polydiscs}

We briefly outline the adjustments required the establish the obstruction part of Theorem \ref{thm:polyemb}. Note that there does not exist an embedding $L(1,x) \hookrightarrow P(a,b)$ when $a<1$ since by \cite{chsc}, Proposition 2.1, the Lagrangian torus $L(1,x)$ has displacement energy $1$. We still argue by contradiction, assuming that $a<2$ and $b<x$.
The proof of Theorem \ref{thm:polyemb} proceeds similarly to that of Theorem \ref{thm:main} except now we compactify $P(a,b)$ to a copy of $S^2 \times S^2$ with factors having areas $a$ and $b$. The analogue of Proposition \ref{prop:indices} is that the deformation index of a finite energy curve $u$ of bidegree $(d_1,d_2)$ asymptotic to Reeb orbits of type $(-m_i,-k_i)$ is given by
$$\mathrm{index}(u) = s -2 + 4(d_1 + d_2) + 2\sum_{i=1}^{s} (m_i + k_i).$$

As before, we consider the situation
$$
\eps E(1,S)\subset U\subset S^2(a)\times S^2(b),
$$
where $U$ is our Weinstein \nbd of $L$. We now use the existence of  $J$-holomorphic planes $u:\C\P^1\priv \{\infty\}\to S^2\times S^2\priv \eps E(1,S)$ of bidegree $(d,1)$, asymptotic to $\gamma_1^{2d+1}$, which exist for $d$ arbitrarily large by \cite{hindker}, and we stretch the neck of $\partial U$. As previously, we split the limit building $B$ into subbuildings $F_0,F_1,\dots,F_T$, where $F_0$ is the maximal connected subbuilding of $B$ in $S\partial \eps E$, $U\priv \eps E$ and $S\partial U$ attached to the negative unmatched end, while $F_1,\dots F_t$ are the connected subbuildings attached to the $T$ positive ends of $F_0$. Arguing as for balls, we get the following:
\begin{enumerate}
\item $\ind(F_i)\geq 1$. The proof is exactly the same as lemma \ref{posout}, replacing the formula for the index of $\tilde u$ by the correct formula in $S^2\times S^2\priv U$.
\item $\ind(F_0)\geq 0$. Since $F_0$ lies in $U$, this is exactly lemma \ref{posin}. Since now $\ind(F_0)=2T-4(d+1)$, we conclude that $T\geq 2(d+1)$.
\end{enumerate}
Then again we get restrictions on the $F_i$: $\ind(F_0)=0$, $\ind(F_i)=1$ and $T=2(d+1)$. Suppose $F_i$ is asymptotic to an orbit of type $(-m_i, -k_i)$ and has bidegree $(d_i^1, d_i^2)$. The index and area formula  for the $F_i$ now give
$$
\begin{array}{l}
1=\ind(F_i)=-1+4(d_i^1+d_i^2)+2(m_i+k_i),\\
\area(F_i)=d_1a+d_2b+m_i+k_ix,
\end{array}
$$
so
$$
\area(F_i)=d_i^1(a-2)+d_i^2(b-2)+k_i(x-1)+1>0,
$$
 while the degrees verify $\sum d_i^1=d$, $\sum d_i^2=1$.

 When $x>2$, taking into account our assumptions $a<2$, $b<x$,  we see that the component with $d_i^2=1$ verifies
 $(b-1)+k_i(x-1)>0$, so $k_i\geq 0$ because $b<x$. Meanwhile the ones with $d_i^2=0$ verify $k_i(x-1)+1> 0$, so $k_i\geq 0$ because $x\geq 2$. Thus $k_i\geq 0$ for all $i$, and since $\sum k_i=0$, we get all $k_i=0$, so all Reeb asymptotics are of the form $\gamma_{(m,0)}$. Exactly the same argument as in section \ref{sec:proof} then shows that $2d+1$ of the $F_i$ have bidegree $(0,0)$ and are asymptotic to $\gamma_{(-1,0)}$ while one has bidegree $(d,1)$ and asymptotic  $\gamma_{(2d+1,0)}$. The area of this subbuilding is
 $$
 ad+b-2d-1=(a-2)d+b-1>0.
 $$
 Since this inequality has to hold for arbitrarily large $d$, it contradicts our assumption $a<2$.

 When $x\leq 2$  we see as above that if all $k_i$ vanish, we get a contradiction with $a<2$. Thus at least one of the $F_i$ must be asymptotic to $\gamma_{(-m,-k)}$, $k<0$. Its area is
 $$
 \ca=d_1(a-2)+d_2(b-2)+k(x-1)+1, \hspace{2cm} k\leq -1.
 $$
Now $1+k(x-1)<1$, so lemma \ref{le:inC2} rules out the possibility that $d_1=d_2=0$. If $d_1\geq 1$, since $d_2\leq 1$ (and since $a<2$ by assumption), we get 
$$
a-2+b-2-x+2=a-2+b-x\geq 0,
$$
which contradicts the fact that $a<2$ and $b<x$. If $d_1=0$, $d_2=1$, we get $b-2-x+2\geq 0$ so $b\geq x$. \cqfd

\end{section}

\begin{section}{Squeezing Lagrangian tori: proof of theorem \ref{thm:constr}}\label{sec:construction}
The construction is slightly delicate, so we proceed in several steps. We first construct a Lagrangian torus close to $P(2,2)$ or to $B(3)$, with several  properties that we use in a second time to show that this Lagrangian torus is in fact Hamiltonian isotopic to a product torus.
\begin{proposition}\label{prop:existence}
Let  $x>2$ and $\cu$ be an arbitrary \nbd of $B(3)\cap P(2,2)$. There exists a Lagrangian torus $L\subset \cu$ which has an integral basis $(e_1,e_2)$ of its $1$-dimensional homology with $\mu(e_1)=\mu(e_2)=2$, $\Om(e_1)=x$, $\Om(e_2)=1$. This Lagrangian torus bounds  a solid torus $\Sigma\subset \C^2$
with symplectic meridian discs having boundary in the class $e_2$ and whose characteristic foliation is by closed leaves.

One can further impose that there exists $a,b\in \C$ such that:
\begin{itemize}
\item[a)] $\Sigma\subset \C^2\priv\{z=a\}\cup\{w=b\}$ (hence the same holds for $L$),
\item[b)] $\lk(e_2,\{z=a\})=\lk(e_2,\{w=b\})=0$,
\item[c)] $\lk(e_1,\{z=a\})=1$, $\lk(e_1,\{w=b\})=-1$.
\end{itemize}
\end{proposition}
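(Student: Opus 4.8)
The plan is to produce $L$ and $\Sigma$ explicitly, as a Hamiltonian suspension, so that the solid-torus structure and the homology data become transparent. First fix a compatible set of auxiliary data: an embedded loop $\gamma_0$ in $\C(z)$, which will carry the ``$e_1$'' direction; a round symplectic disc $\Delta_0\subset\C(w)$ of area $1$, the model meridian disc; and a loop $(\Phi_\theta)_{\theta\in\R/2\pi\Z}$ of compactly supported Hamiltonian diffeomorphisms of $\C(w)$, with $\Phi_0=\mathrm{id}$, generated by a $2\pi$-periodic family $H_\theta$. Identifying a regular \nbd of $\gamma_0$ in $\C(z)$ with $T^*S^1$, let
$$
\Sigma=\bigl\{\,\bigl(z(\theta,q),\,\Phi_\theta(q)\bigr)\ :\ \theta\in\R/2\pi\Z,\ q\in\Delta_0\,\bigr\},\qquad L=\partial\Sigma,
$$
where the $z$-coordinate records $(\theta,-H_\theta(\Phi_\theta q))\in T^*S^1$. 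Since $\gamma_0$ is embedded the slices $\{\theta=\mathrm{const}\}$ are disjoint, so $\Sigma$ is an embedded solid torus with $\partial\Sigma=L$, each slice a symplectic meridian disc symplectomorphic to $\Delta_0$ with boundary class $e_2$ and $\Om(e_2)=\mathrm{area}(\Delta_0)=1$. By the standard suspension computation $L$ is Lagrangian and the $\partial_\theta$-direction is characteristic for $\Sigma$ (both using the $-H_\theta$ term in the $T^*S^1$-fibre), so the characteristic foliation consists of the suspension orbits $\theta\mapsto(z(\theta,q),\Phi_\theta(q))$, which are closed because $(\Phi_\theta)$ is a loop and $H_\theta$ is periodic.

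It remains to choose $\gamma_0$ and $(\Phi_\theta)$ so that, writing $e_1$ for the class of the $\theta$-loop: (i) $\Sigma\subset\cu$; (ii) $\Om(e_1)=x$; (iii) $\mu(e_1)=2$. For (ii), Stokes' theorem expresses $\Om(e_1)$ through the area swept by the $\C(z)$-part of $e_1$ and the action of the transport loop $\theta\mapsto\Phi_\theta(w_*)$ of a basepoint $w_*\in\partial\Delta_0$ in $\C(w)$; one designs the transport path of $\Delta_0$ to wind a suitable (large, depending on $x$) number of times around a thin annular region of $\C(w)$, so that this action equals the prescribed $x$ — possible since an action is a winding-weighted area — while keeping $\bigcup_\theta\Phi_\theta(\Delta_0)$ inside $\{\pi|w|^2<2\}$. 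For (i) one arranges pointwise on $\Sigma$ that $\pi|z|^2<2$, $\pi|w|^2<2$ and $\pi(|z|^2+|w|^2)<3$; concretely $\gamma_0$ reaches large values of $\pi|z|^2$ only where the transport has pushed $\Delta_\theta$ close to $\{w=0\}$, so the $z$- and $w$-``heights'' are played off against one another along the $\theta$-circle — this is the delicate, multi-step heart of the construction. For (iii) one takes $(\Phi_\theta)$ to act near $\partial\Delta_0$ with no net rotation of $T(\partial\Delta_0)$, say locally by translations; then the loop $\theta\mapsto T_{e_1(\theta)}L$ of tangent Lagrangian planes is homotopic to the one coming from the trivial suspension, whose Maslov index is $2$, computed from the embedded holomorphic disc filling the core circle of $\gamma_0$. (Similarly $\mu(e_2)=2$, since $\Delta_\theta$ is an embedded symplectic disc with boundary on $L$ and hence, after a perturbation rel boundary to a $J$-holomorphic disc, has Maslov number $2$.)

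For the reference lines, choose $a$ in the bounded component of $\C(z)\setminus\gamma_0$; then $\{z=a\}\cap\Sigma=\emptyset$, the meridian discs lie in slices $\{z=\gamma_0(\theta)\}$ with $\gamma_0(\theta)\ne a$ so $\lk(e_2,\{z=a\})=0$, and $\gamma_0$ winds once around $a$ so $\lk(e_1,\{z=a\})=1$. Compatibly with (ii), design the transport path so that the swept region $\bigcup_\theta\Phi_\theta(\Delta_0)$ omits a point $b$ around which the $\C(w)$-trace of $e_1$ has winding number $-1$; then $\{w=b\}\cap\Sigma=\emptyset$, $\lk(e_2,\{w=b\})=0$ and $\lk(e_1,\{w=b\})=-1$. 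This gives a), and $(e_1,e_2)$ is an integral basis of $H_1(L,\Z)$ with the properties in b), c).

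The main obstacle is step (i)--(ii) above: one must squeeze a solid torus whose longitude $e_1$ carries arbitrarily large symplectic area $x$ into the sharp region $B(3)\cap P(2,2)$, essentially exhausting the available capacity in each coordinate at once subject to $\pi(|z|^2+|w|^2)<3$, while keeping the Maslov-$2$ normalization of the transport and realizing the $\pm1$ linking numbers at $a$ and $b$. The apparent conflict between ``$\Om(e_1)=x$ large'' and ``$\Sigma$ bounded'' is resolved only because symplectic action is winding-weighted area, so that a thin, many-times-wound transport path does the job; making this coexist with the other constraints is exactly the bookkeeping the construction performs. Once $\Sigma$ has been built, the solid-torus structure, the computations of $\Om$ and $\mu$, and properties a)--c) are routine.
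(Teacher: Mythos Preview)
There is a genuine gap in step (ii), where you claim that $\Omega(e_1)$ can be made equal to an arbitrary $x>2$ by winding the transport path. In your suspension the $z$-coordinate of the $e_1$-loop is $(\theta,-H_\theta(\Phi_\theta w_*))\in T^*S^1$, so if $A_0$ denotes the area enclosed by $\gamma_0$ one computes directly
\[
\Omega(e_1)=\int_{e_1}\lambda_z+\int_{e_1}\lambda_w
=\Big(A_0-\int_0^1 H_\theta(\Phi_\theta w_*)\,d\theta\Big)+\int_0^1\lambda_w\big(\tfrac{d}{d\theta}\Phi_\theta(w_*)\big)\,d\theta
=A_0+\mathcal A(w_*),
\]
where $\mathcal A(w_*)=\int(\lambda_w(\dot\gamma)-H)\,d\theta$ is the Hamiltonian action of the orbit $\gamma(\theta)=\Phi_\theta(w_*)$. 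Because $(\Phi_\theta)$ is a \emph{loop}, every point of $\C(w)$ is a $1$-periodic orbit and hence a critical point of the action functional; thus $\mathcal A$ is constant on $\C(w)$. Since you take $(\Phi_\theta)$ compactly supported, evaluating at any point outside the support gives $\mathcal A\equiv 0$, and therefore $\Omega(e_1)=A_0$ \emph{regardless of the transport}. The large ``winding-weighted area'' $\int\lambda_w$ you create in $\C(w)$ is exactly cancelled by the $-\int H$ correction that the Lagrangian condition forces on the $\C(z)$-coordinate. As $\gamma_0$ is embedded in $\{\pi|z|^2<2\}$ you are stuck with $\Omega(e_1)=A_0<2$, never $x>2$. (Renormalising $H_\theta$ by a $\theta$-dependent constant shifts $\mathcal A$, but simultaneously pushes $\pi|z|^2=A_0-H_\theta$ outside $(0,2)$, so this does not help.)

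The paper avoids this obstruction by a genuinely different mechanism. Instead of an embedded base curve with a winding transport, it starts from an \emph{immersed} curve $\gamma\subset\C(z)$ that spirals $n$ times around a square of area $\approx A^2$, so that the product class $\tilde e_1=[\gamma\times\{*\}]$ has large area $\approx nA^2$ and large Maslov index $2n$. A localized Hamiltonian in $\C^2$ (not a loop) is used only to resolve the self-intersections of $\gamma\times T$, yielding an embedded torus; the desired Maslov-$2$ class is then obtained by the change of basis $e_1=\tilde e_1-(n-1)e_2$, which trades the excess Maslov index for $(n-1)$ units of area and gives $\Omega(e_1)=n(A^2-\delta)-(n-1)$, tunable to any $x>1$. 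The squeezing into a \nbd of $B(3)\cap P(2,2)$ and the linking conditions are then arranged by further explicit modifications. The essential idea---large $\Omega(e_1)$ with $\mu(e_1)=2$ via an immersed winding curve plus a basis change---is precisely what your suspension construction lacks.
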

\noindent{\it Proof:} Let $A\geq 1$ be a real number to be chosen more specifically later. Let $\gamma$ be the immersed closed loop in the $z=(x_1,y_1)$-plane represented in Figure \ref{fig:gamma}. It is contained in the square $S:=\{0<x_1<A\; ,\; 0<y_1<A\}$ and approximates an $n$-times cover of $\partial S$. Label the uppermost horizontal segment close to $\{y_1=0\}$ by $H$ and the rightmost vertical segment close to $\{x_1=A\}$ by $V$ (see figure \ref{fig:gamma}). For technical reasons, we chose $0<\eps\ll 1$ and assume that $H\subset \{y_1<\eps\}$ and $V\subset \{x_1\geq A-\nf \eps 2\}$, while $\gamma$ lies in the $\eps$-strip around $\partial S$.  Removing $H$ from $\gamma$ provides an embedded curve spiraling inwards and winding $n$ times around $\partial S$. Replacing $H$ results in $(n-1)$ self-intersections, all near $H \cap V$.

Let $T:=\partial D$ be the unit circle in $\C_w$, $w=x_2+iy_2$. The product of $\gamma$ by $T$ is an immersed Lagrangian torus in $\C^2$. Denoting by $\tilde e_1=[\gamma\times\{*\}]\in H_1(\gamma\times T)$ and $e_2=[\{*\}\times T]$, we have $\mu(\tilde e_1)=2n$, $\mu(e_2)=2$, $\Om(\tilde e_1)\approx nA^2$ and $\Om(e_2)=1$.

\paragraph{Step 1:} We first construct a Lagrangian torus $L$ as required, except for the linking conditions b) and c). Let $K(w)$ be a
Hamiltonian function which displaces the circle $T$ in a disc of area $2+\eps$ and satisfies $0\leq G\leq 1+\eps$. Let $\chi:\R\to [0,1]$ be a  smooth function with $\chi(t)=1$ for $t\geq A-\eps$ and $0\leq \chi'(t)\leq 1$ (recall $A>1$). Let also $\Om\subset \C$ be defined by $\Om:=Q_1\cup Q_2\cup Q_3$, with
$$
\begin{array}{l}
Q_1:=\{\eps<x_1<A-\eps\}\\
Q_2:=\{x_1\geq A-\nf \eps 2\}\\
Q_3:=\{A-\eps <x_1<A\}\cap \{0<y_1<\eps\}.
\end{array}
$$
Note that $H\subset Q_1\cup Q_3$, $V\subset Q_2\cup Q_3$ and $Q_1\cap Q_2=\emptyset$ (see figure \ref{fig:gamma}). We leave the reader check the following straightforward facts:
\begin{claim}The function $f:\Om\subset \C\to \R$ defined by $f_{|Q_1}:=\chi(x_1)$, $f_{|Q_2}:=1-\chi(y_1)$ and $f_{|Q_3}:=1$ is smooth, and its  Hamiltonian flow verifies:
\begin{enumerate}
\item $\Phi^t_{f|Q_3}=\id$,
\item $\Phi^t_f(H\cap Q_1)\subset \subset Q_1\cap \{0<y_1<A-\eps\}$ and $\Phi^t_f(H)\cap(\gamma\priv H)=\emptyset$,
\item $\Phi^t_f(V\cap Q_2) \subset Q_2\cap \{A-\nf \eps 2<x_1<A+1\}$ and $\Phi^t_f(V)\cap (\gamma\priv V)=\emptyset$.
\end{enumerate}
\end{claim}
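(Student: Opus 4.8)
The plan is to split the claim into a nearly trivial part — smoothness of $f$ and item~(1) — and a short shear computation giving items~(2) and~(3).

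\emph{Smoothness and item~(1).} The sets $Q_1,Q_2,Q_3$ form an open cover of $\Om$, on which $f$ is given by the smooth formulas $\chi(x_1)$, $1-\chi(y_1)$, $1$. Since $Q_1\cap Q_2=Q_1\cap Q_3=\emptyset$, the only overlap to check is $Q_2\cap Q_3\subset\{0<y_1<\eps\}$, where $1-\chi(y_1)=1$ because $\chi$ vanishes near $0$ (a requirement I would include in the choice of $\chi$); hence $f$ is smooth. Moreover $f\equiv 1$ on the open set $\{x_1>A-\eps\}\cap\{0<y_1<\eps\}\supset Q_3$ (using $\chi\equiv 0$ near $0$ and $\chi\equiv 1$ on $[A-\eps,\infty)$), so $X_f$ vanishes there and $\Phi^t_f$ is the identity on $Q_3$.

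\emph{The flow.} On $Q_1$ one has $df=\chi'(x_1)\,dx_1$, so $X_f$ is a multiple of $\partial_{y_1}$ and $\Phi^t_f$ preserves $x_1$ while translating $y_1$ at speed $\chi'(x_1)\in[0,1]$; on $Q_2$ one has $df=-\chi'(y_1)\,dy_1$, so $X_f$ is a multiple of $\partial_{x_1}$ and $\Phi^t_f$ preserves $y_1$ while translating $x_1$ at speed $\chi'(y_1)\in[0,1]$. With the orientation conventions of the paper these translations push $H$ (which lies in $\{y_1<\eps\}$) into the square $S$ and $V$ (which lies in $\{x_1\ge A-\eps/2\}$) out of $S$ across $\{x_1=A\}$. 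Because $0\le\chi'\le1$, the displacement over any time $t\le1$ is at most $1$. Since $f=\chi(x_1)$ on all of $\{y_1<\eps\}\cap\Om$, the piece $H\cap Q_1$ — a near-horizontal graph over $x_1\in(\eps,A-\eps)$ at height $<\eps$ — is lifted to height $<1+\eps$ while keeping the same $x_1$, hence lands inside $Q_1\cap\{0<y_1<A-\eps\}$ as soon as $A>1+2\eps$, a constraint I am free to impose on the still-unspecified $A$. The same computation gives $\Phi^t_f(V\cap Q_2)\subset Q_2\cap\{A-\eps/2<x_1<A+1\}$.

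\emph{Disjointness.} This is the only place I would actually invoke Figure~\ref{fig:gamma}. The mechanism is that $\Phi^t_f$ translates $H$ (resp.\ $V$) \emph{monotonically} in one direction which, $H$ being the outermost strand of $\gamma$ near $\{y_1=0\}$ and $V$ the outermost near $\{x_1=A\}$, points away from every other strand of $\gamma$: within the chart where the shear acts, $H$ (resp.\ $V$) already lies on the far side of all the remaining strands at time $0$ and only moves further away from them, while outside that chart it does not move at all; combined, this keeps $\Phi^t_f(H)\cap(\gamma\priv H)$ (resp.\ $\Phi^t_f(V)\cap(\gamma\priv V)$) empty for all $t\ge 0$. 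I expect this reading-off from the figure — that $H$ and $V$ are positioned so the shear sweeps them clear of the rest of $\gamma$ — to be the only non-routine step; everything else follows in a line or two once $X_f$ has been written down.
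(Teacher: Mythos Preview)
The paper does not prove this claim at all; it explicitly says ``We leave the reader check the following straightforward facts.'' Your argument --- compute the shear vector fields on $Q_1$ and $Q_2$, note they are constant on $Q_3$, and read the disjointness off the figure --- is precisely the intended verification and is correct.

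One small gap: in your disjointness argument for $H$ you only argue that the shear moves $H$ away from the other horizontal strands near $\{y_1=0\}$; you should also observe that the first containment in item~(2), namely $\Phi^t_f(H\cap Q_1)\subset\{y_1<A-\eps\}$, is what keeps the image below the horizontal strands of $\gamma$ near $\{y_1=A\}$ (which lie at $y_1>A-\eps$). Also, since the paper later uses $\pi_z(\Phi^1_K(P))\subset\bigcup_{s\le 1+\eps}\Phi^s_f(\pi_z P)$, the relevant time range is $t\in[0,1+\eps]$ rather than $[0,1]$; this only perturbs your constraint on $A$ by $O(\eps)$.
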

We finally define the function $K(z,w):=f(z)G(w)$ on $\Om\times \C\subset \C^2$, and
$$
L:=\gamma\priv(H\cup V)\times T\,\cup\, \Phi^1_K(H\cup V\times T).
$$
\begin{figure}[h!]
\begin{center}
\input 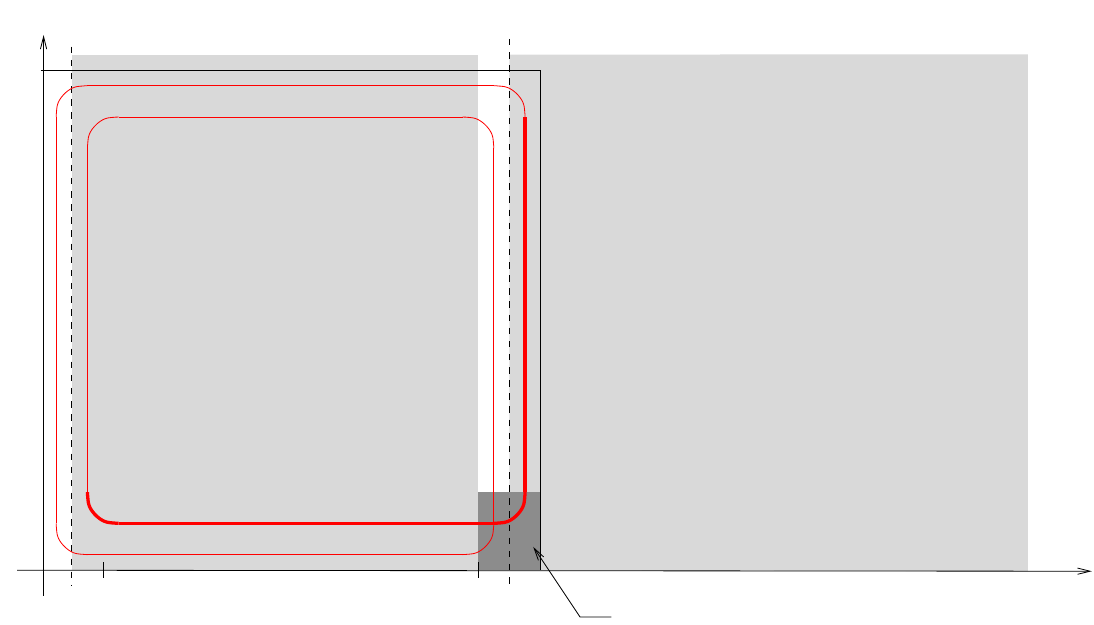_t
\end{center}
\caption{The curve $\gamma$, and the function $K$.}
\label{fig:gamma}
\end{figure}

Since $f=0$ near $\partial (H\cup V)$ (consider $H\cup V$ as a connected arc), $L$ is an immersed torus whose area and Maslov classes coincide with those of $\gamma\times T$. Notice also that the Hamiltonian vector field of $K$ is $G(w)\vec X_f(z)+f(z)\vec X_G(w)$, and since $0\leq f\leq 1$, and $0\leq G\leq 1+\eps$, for any subset $P\subset\Om\times \C$, $\pi_z(\Phi^t_K(P))\subset \cup_{s\leq (1+\eps)t}\Phi^s_f(\pi_zP)$ and $\pi_w\Phi^t_K(P)\subset \cup_{s\leq (1+\eps)t} \Phi^s_G(\pi_wP)$. Hence, by points (3) and (4) of the above claim, $L$ has no self-intersection in $Q_1\times \C$, nor in $Q_2\times \C$. Moreover, since $f=1$ on $Q_3$, $\Phi^1_K=\id\times \Phi^1_G$ on $Q_3\times \C$, so
$\Phi^1_K(H\cup V\times T)\cap \gamma\times T\cap Q_3=\emptyset$. Therefore, $L$ is embedded.  The homology of $L$ is generated by $\tilde e_1:=[\tilde \gamma]:=[\gamma\priv (H\cup V)\times \{*\}\cup \Phi^1_K(H\cup V\times \{*\})]$ and $e_2:=[\{*\}\times T]$ with $\{*\}$ in $\gamma\priv \supp(f)$.
We have $\mu(e_2)=2$ and $\mu(\tilde e_1)=2n$, so $(e_1:=\tilde e_1-(n-1)e_2,e_2)$ is a Maslov $2$ integral basis of $H_1(L)$, with areas
$$
\ca(e_2)=1,\hspace{0,5cm} \ca(e_1)=n(A^2-\delta)-(n-1),
$$
where $\delta$ is a term that can be chosen arbitrarily small. Given $x$, one may find $A$ arbitrarily close to $1$, $\delta\ll 1$ and $n$ large enough, so that $\ca(e_1)=x$, which we assume henceforth. Thus, $L$ is an embedded Lagrangian torus, with a Maslov two basis of areas $x$ and $1$, which lies in $D(A(A+1))\times D(2)$, which is an arbitrary \nbd of $P(2,2)$  provided $A$ is close to $1$.

\paragraph{Step 2:} We now show that $L$ can in fact be taken into an arbitrary \nbd of $B(3) \cap P(2,2)$. Let $\eps\ll 1$ and $\psi:[0,A+1]\times [0,A]\to D(A(A+1)+\eps)$ be such that $\psi([0,A]\times [0,A])\subset D(A^2+\eps)$ and $\psi([A,A+1]\times [0,\rho])\subset D(A^2+\rho+\eps)$ $\forall \rho$. Such a symplectic diffeomorphism is easy to construct by stretching the part $[A,A+1]\times[0,A]$ horizontally by a factor $4$,  then wrapping the long strip around the square $[0,A]^2$ (see figure \ref{fig:psi1}, or  \cite[Lemma 3.3.3]{schl}.).

\begin{figure}[h!]
\begin{center}
\input 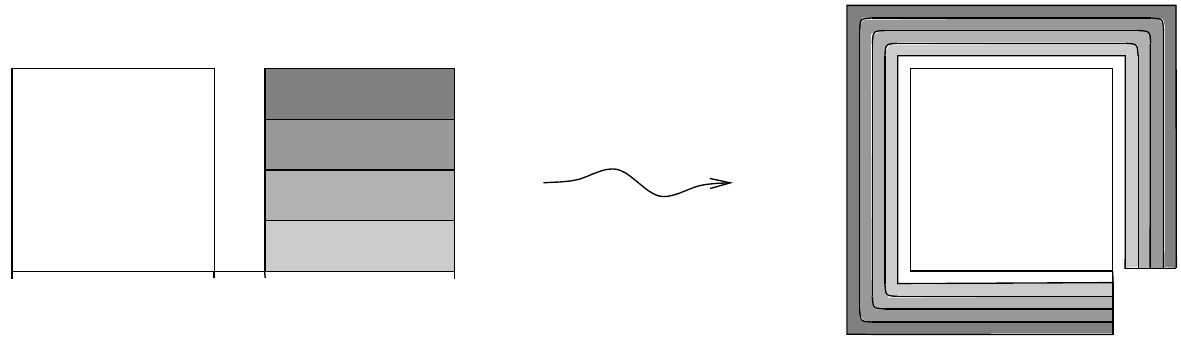_t
\end{center}
\caption{The projection of $L$ onto the $z$-plane, and its image by $\psi$.}
\label{fig:psi1}
\end{figure}

We claim that $\psi\times \id(L)$ lies close to $B(3)$ provided $A$ is close to $1$. Indeed,
\begin{itemize}
\item[\sbull] if $(z,w)\in L$, with $z\in[0,A]\times [0,A]$, then $\psi\times \id(z,w)=(z',w')$ verifies $|z'|\in D(A^2+\eps)$ and $|w'|\in D(2)$, so $(z',w')\in B(A^2+2+\eps)$.
\item[\sbull] If $(z,w)\in L$ with $z\in [A,A+1]\times\{\rho\}$, then $(z,w)\in \Phi^1_K(V\times T)$. Since
$$
\begin{array}{ll}
\vec X_K(z,w)& =f(z)\vec X_G(w)+G(w)\vec X_f(z)\\
 & =[1-\chi(y_1)]\vec X_G(w)+G(w)\vec X_f(z) \text{ in }Q_2,
\end{array}
$$
$\vec X_K$ preserves the hyperplanes $\{y_1=c\}$ in $Q_2\times \C$, so $(z,w)=\Phi^1_K(z',w')$ with $\im z=\im z'=\rho$. Then,
$$
\pi_w\Phi^t_K(z,w)=\Phi^{(1-\chi(\rho))t}_G(w),
$$
so we may assume $w\in D(2-\chi(\rho))$, while $z\in [A,A+1]\times \{\rho\}$. As a result, $\psi\times \id(z,w)\in D(A^2+\rho+\eps)\times D(2-\chi(\rho))\subset B(A^2+2+\rho-\chi(\rho)+\eps)$. Notice now that since $A$ is close to $1$, $\chi(\rho)$ can be chosen at $\cc^0$-distance $\eps$ from the identity, and this shows that $\psi(z,w)$ lies arbitrarily close to $B(3)$.
\end{itemize}

\paragraph{Step 3:} We now slightly alter the previous construction to achieve the linking condition. Note first that taking $a$ in $\{\re z<\eps\}$ in such a way that $\gamma$ winds around $a$ exactly once achieves the correct linking between $L$ and $\{z=a\}$. The problem is therefore with the line of the type $\{w=b\}$.

First, fix $b\in \C$ in the complement of $\cup \Phi^t_G(T)$, in $D(2+\eps)$. In particular, since $\Phi^1_G$ displaces $T$, $b$ is in the complement of $D\cup \phi^1_G(D)$, where $T=\partial D$. There exists a Hamiltonian diffeomorphism $\Psi$ with support in $D(2+\eps)$, disjoint from $D\cup \Phi^1_G(D)$ and such that $\forall p\in T$, the path $\Phi_G^t (p)$ for $0 \le t \le 1$ and $\Psi \circ \Phi_G^{2-t}(p)$ for $1 \le t \le 2$ winds around $b$ with winding $-1$. Such a diffeomorphism is not easy to describe {\it verbatim}, but easy to draw (see figure \ref{fig:Psi}).

\begin{figure}[h!]
\begin{center}
\input 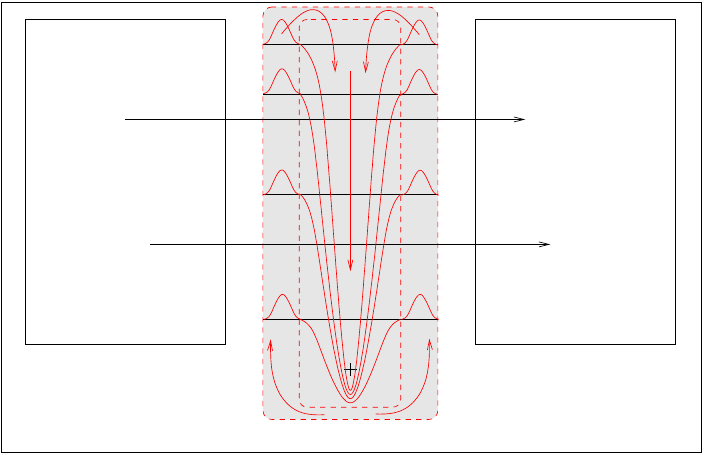_t
\end{center}
\caption{The diffeomorphism $\Psi$.}
\label{fig:Psi}
{\footnotesize The red oriented curves represent the flow lines of $\Psi$. The red unoriented lines represent the image of the horizontal foliation. The support of $\Psi$ can be made arbitrarily thin, so that the whole picture fits into a disc of area $2+\eps$.}
\end{figure}

We now define
$$
L_\Psi:=\gamma\priv (H\cup V)\times T\cup \Phi^1_K(H\times T)\cup  \Psi(\Phi^1_K(V\times T)),
$$
where $K(z,w)=f(z)G(w)$ as before and by abuse of notation we are writing $\Psi(z,w) = (z, \Psi(w))$.

Let us explain why $L_\Psi$ satisfies all our requirements. Note first that $L_\Psi$ is obtained by $L$ from cutting the tube $\ct:=\Phi^1_K(V\times T)$ and pasting $\ct_{\Psi}:=\Psi(\Phi^1_K(V\times T))$. As $\pi_w(\Phi^1_K( \partial V \times K)) = D\cup \Phi^1_G(D)$ is disjoint from the support of $\Psi$ we have that $\ct_{\Psi}$ differs from $\ct$ by a Hamiltonian generated by a function with compact support. Moreover, since the corresponding Hamiltonian flow is parallel to the $w$-plane, the flow of $\ct$ remains disjoint from the remainder of $L$ and so the generating function can be extended to be identically $0$ near the remainder of $L$. We conclude that
$L_\Psi$ differs from $L$ by a Hamiltonian diffeomorphism and in particular $L_\Psi$ has the same Maslov and area classes as $L$.

We now claim that $L_\Psi$ has the correct linking with the lines $\{z=a\}$ and $\{w=b\}$. It is clear for the former, while the latter needs a justification. First, since $e_2$ is represented by $\{*\}\times T$ for suitable $\{*\}$, and since $b\notin D$, $\lk(e_2,\{w=b\})=0$.  Fix now $w_0\in T$. A representative of the class $\tilde e_1=e_1+(n-1)e_2$ is simply
$$
\gamma\priv (H\cup V)\times \{w_0\}\cup \Phi^1_K(H\times \{w_0\})\cup \Psi(\Phi^1_K(V\times \{w_0\})).
$$
Parametrize $H$ by $\rho\mapsto \rho+ ic$, for $\rho\in [\rho_0,\rho_1]$, $\rho_0<\eps$ and $\rho_1>A$ and $c<\eps$. Then,
\begin{equation}
\Phi^1_K(\rho+ic,w_0)=(\rho+i\tilde c(\rho),\Phi_G^{\chi(\rho)}(w_0)). \label{eq:H}
\end{equation}
Similarly $V$ can be parameterized by $\rho \mapsto b + i\rho$ and
\begin{equation}
\Psi(\Phi^1_K(b +i\rho,w_0) = \big(\tilde b(\rho)+i\rho,\Psi\circ \Phi^{1-\chi(\rho)}_G(w_0)\big) \label{eq:V}
\end{equation}

By choice of $\Psi$, we therefore see that $\lk(\tilde e_1,\{w=b\})=-1$, and since $e_1=\tilde e_1-(n-1)e_2$, that $\lk(e_1,\{w=b\})=-1$. In view of the particular form of $\Psi$, whose support can clearly be given any arbitrarily small area, the arguments that showed that $L$ can be taken into an arbitrary \nbd of $B(3)$ go through. Finally, $L_\Psi$ bounds the solid torus
$$
\Sigma_\Psi:=\gamma\priv (H\cup V)\times D\cup \Phi^1_K(H\times D)\cup \Psi(\Phi^1_K(V\times D)).
$$
The characteristic foliation of $\gamma\times D$ is by curves $\gamma\times \{*\}$, and  $\Sigma_\Psi$ is obtained from $\gamma\times D$  by explicit  Hamiltonian diffeomorphisms on various pieces, and therefore the characteristic leaves remain closed as required.  \cqfd

\end{section}

Let us now prove that the Lagrangian tori that we have just produced are Hamiltonian isotopic to a product torus.

\begin{proposition}\label{prop:isotopy}
A Lagrangian torus that satisfies the requirements of proposition \ref{prop:existence} is Hamiltonian isotopic to the product torus $L(1,x)$ in $\C^2$.
\end{proposition}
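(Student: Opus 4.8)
The plan is to reduce the statement to the Lagrangian unknottedness theorem of Dimitroglou Rizell--Goodman--Ivrii \cite{gir}, the role of the solid torus and linking data in Proposition~\ref{prop:existence} being to rigidify the situation enough that ``Lagrangian isotopic to a product torus'' can be promoted to ``Hamiltonian isotopic to $L(1,x)$''.

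I would begin with the observation that, once we know $L$ is Hamiltonian isotopic to \emph{some} product torus $L(p,q)$, it must be $L(1,x)$. Indeed, a Hamiltonian isotopy preserves both the Maslov homomorphism and the Liouville class, so relative to a Maslov-$2$ basis the areas of $L(p,q)$ are those of $L$, namely $\Omega(e_1)=x$ and $\Omega(e_2)=1$; since $x>1$, the only Maslov-preserving automorphism of $H_1(T^2)$ fixing this pair of values is the identity, so $\{p,q\}=\{1,x\}$. Thus everything comes down to producing a Hamiltonian isotopy from $L$ to a product torus.

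For that I would invoke \cite{gir} to get a Lagrangian isotopy $\{L_t\}$ from $L$ to a product torus, and then upgrade it to a Hamiltonian one. The obstruction to this upgrade is the flux of $\{L_t\}$, an element of $H^1(T^2;\RR)\cong\RR^2$; since $\C^2$ is exact, its value is the difference of the Liouville classes of the two ends, read off through the isotopy, so it suffices to know that the target product torus can be chosen with Liouville class matching $L$'s under the identification $\{L_t\}$ induces. This is where the explicit solid torus $\Sigma$ with closed characteristic foliation and the linking conditions (a)--(c) enter: they are the structure the arguments of \cite{gir} require, and they track under Lagrangian isotopy, once the disjoint lines are dragged along, pinning the induced identification of $H_1$ down to the one compatible with the product structure, hence making the flux vanish. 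With zero flux, and using that the Liouville class of a product torus can be varied freely by small Lagrangian isotopies (a local product structure of the space of Lagrangian tori over $H^1$, via Weinstein's neighbourhood theorem), one homotopes $\{L_t\}$ rel endpoints to an isotopy with constant Liouville class, which is then Hamiltonian. Equivalently, one may cite directly the statement in \cite{gir} that a Lagrangian torus bounding a solid torus with closed characteristic foliation is Hamiltonian isotopic to a product torus.

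The main obstacle is precisely this passage from a Lagrangian to a Hamiltonian isotopy, together with the accompanying bookkeeping: a bare Lagrangian isotopy to a product torus is cheap, but it only determines $L$ up to the action of flux, and one must use the rigid extra data of Proposition~\ref{prop:existence} -- the solid-torus filling and the linking numbers -- to see that this flux is trivial, so that the product torus reached is genuinely $L(1,x)$ rather than some $L(p,q)$ with $p-q=x-1$.
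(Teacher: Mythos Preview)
Your architecture matches the paper's --- obtain a Lagrangian isotopy to a product torus using the methods of \cite{gir}, then upgrade it to a Hamiltonian one --- but the upgrade step, which is where the content lies, is not carried out. Saying that the solid torus and linking data ``pin the identification of $H_1$ down'' and make ``the flux vanish'' is a hope, not an argument, and your fallback of citing directly a Hamiltonian-unknottedness statement from \cite{gir} does not work: that paper proves \emph{Lagrangian} unknottedness (and in $S^2\times S^2$ and $\C\P^2$); the present paper borrows from it only the construction of the Lagrangian isotopy (their Theorem~6.1) and performs the Hamiltonian upgrade separately, by hand.

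The mechanism the paper actually uses is this. The linking conditions (a)--(c) are arranged so that both $L$ and a suitably repositioned copy $L_0$ of $L(1,x)$ lie in $X:=\C^2\setminus(\{z=a\}\cup\{w=b\})$, and the Lagrangian isotopy $L_t$ is built \emph{inside $X$} throughout. Now $X$ sits in $T^*\T^2$ as $\{p_1,p_2>0\}$, where one has two explicit families of transformations: conformal dilations $(\theta,p)\mapsto(\theta,\lambda p)$, which rescale both actions simultaneously, and fibrewise translations $\tau^i_c:(\theta,p)\mapsto(\theta,p+c\,e_i)$, which for $c>0$ preserve $X$ and shift the action of a class by $c$ times its linking number with the corresponding removed line. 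Conditions (b) and (c) say precisely that $\tau^1_c,\tau^2_c$ move $\Omega(e_1)$ by $\pm c$ while leaving $\Omega(e_2)$ fixed. Applying these with $t$-dependent parameters (first dilate by $1/\beta_t$ to force $\beta_t\equiv 1$, then translate to force $\alpha_t\equiv x$) yields a new isotopy in $\C^2$ with constant Liouville class, hence Hamiltonian. Your sketch contains none of this; the linking numbers are used not to control an abstract $H_1$-identification but to compute how the $T^*\T^2$-translations act on the area class. (A minor side point: your opening reduction is unnecessary, and its justification --- ``the only Maslov-preserving automorphism of $H_1(\T^2)$ fixing this pair of values is the identity'' --- is false, since those automorphisms form an infinite group.)
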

\noindent {\it Proof:} Let $L,\Sigma,a,b$ be as in Proposition \ref{prop:existence}, but observe that without loss of generality $a=b=0$. We denote $X:=\C^2\priv\{z=0\}\cup\{w=0\}$. We wish to prove that $L$ is Hamiltonian isotopic to $L(x,1)=S^1(x)\times S^1(1)$ in $\C^2$. We proceed in two steps.



\paragraph{Step 1 (Lagrangian isotopy):} We first reposition the product torus $L(x,1)$ in $X$ and then find a Lagrangian isotopy inside $X$ between $L$ and the repositioned torus.

Consider the linear symplectomorphism $$A:(z,w) \mapsto \frac{1}{\sqrt{1-\lambda^2}}(z+\lambda \overline{w}, w + \lambda \overline{z})$$ where $\lambda$ is a constant satisfying $\sqrt{\frac{1}{x}} < \lambda <1$. We observe that $A(L(x,1)) \subset X$ and denote the image by $L_0$. We denote the image of the standard basis of $H_1(L(x,1))$ by $f_1, f_2 \in H_1(L_0)$, so $\mu(f_1) = \mu(f_2)=2$ and $\Omega(f_1)=x, \Omega(f_2)=1$. Also $\lk(f_1,\{z=0\})=1$, $\lk(f_1,\{w=0\})=-1$ and $\lk(f_2,\{z=0\})=0$, $\lk(f_2,\{w=0\})=0$. Moreover the solid torus $\Sigma_0 = A(\{ \pi|z|^2 = x, \pi |w|^2 \le 1\})$ also lies in $X$ and has trivial characteristic foliation.

The existence of a Lagrangian isotopy between $L$ and $L_0$ given the existence of $\Sigma$ and $\Sigma_0$ has already been established in  \cite[Theorem 6.1]{gir},  (see also \cite[Proposition 3.4.6]{ivrii}). For the sake of clarity and completeness, we briefly recall an outline of the argument here.

Let us fix characteristic leaves (called core circles) $\Gamma \subset \Sigma$ and $\Gamma_0:=A(S^1(x)\times \{0\}) \subset \Sigma_0$, and  a meridian disk $D$ in $\Sigma$. The return map of the characteristic foliation on $D$ in $\Sigma$ being the identity, any isotopy of circles in $D$ starting from $\partial D$ defines a Lagrangian isotopy of $L$, by considering the suspension of these loops by the characteristic foliation in $\Sigma$.
Hence an isotopy of loops that shrinks $\partial D$ to a small loop around $D\cap \Gamma$ gives a Lagrangian isotopy from $L$ to a small neighborhood of $\Gamma$ in $\Sigma$.

Next, since the linking numbers about the axes are the same, $\Gamma$ and $\Gamma_0$ are smoothly isotopic in $X$, and the smooth isotopy extends to a {\it symplectic} isotopy $\psi_t$ (not Hamiltonian), defined in a \nbd of $\Gamma$, that brings $\Gamma$ to $\Gamma_0$.
Since these two curves are characteristic leaves, the image $\Sigma' = \psi_1(\Sigma)$  is an embedding of $S^1\times D(\eps)$  which is tangent to $\Sigma_0$ along $\Gamma_0$.

There may not necessarily be a symplectic isotopy mapping $\Sigma'$ into $\Sigma_0$, indeed the obstruction is the relative winding number of the characteristic foliations in $\Sigma'$ and $\Sigma_0$ with respect to a trivialization of the symplectic normal bundle to $\Gamma_0 = \psi_1(\Gamma)$. However this relative winding can be corrected by a small perturbation of $\Sigma_0$ as follows.



We identify a neighborhood of $\Gamma_0$ in $X$ with a neighborhood of $S^1 \times \{0\} \times \{ 0 \}$ in $S^1 \times \R \times \C$ with the product symplectic form from $S^1 \times \R = T^* S^1$ and $\C$. We can make this identification such that $\Sigma_0$ is $S^1 \times \{ 0 \} \times \C$. Given this choose a smooth function $\chi:\R^+\to \R^+$ with small support, small $\cc^0$-norm, linear with slope $N$ near $0$, and define
$$
\wdt\Sigma_0:=\{(\theta,\chi(|w|^2),w)\in S^1\times \R\times \C,\; |w|<1\}.
$$
The characteristic foliation of $\wdt \Sigma_0$ is given by the curves $\theta\mapsto(\theta,\chi(|w|^2), we^{i\chi'(|w|^2)\theta})$. Thus close to $\Gamma_0$ the characteristic foliation of $\wdt \Sigma_0$ can be arranged to have a winding $N$ equal to that of $\Sigma'$. It follows that we can find a symplectic isotopy $\zeta_t$ taking a neighborhood of $\Gamma_0$ in $\Sigma'$ into $\wdt \Sigma_0$.

The image $\wdt L = \zeta_1(\psi_1(L))$ of $L$ is now a circle of characteristic leaves in $\wdt \Sigma_0$ intersecting a meridinal disk $D_0$ in a small circle $\sigma_0$ about $\Gamma_0$. It remains then to find a Lagrangian isotopy between $\wdt L$ and $L_0$. Now, the return map of a meridinal disk generated by characteristic leaves in $\wdt \Sigma_0$ is not the identity in the region where $\chi$ has nonintegral slope; however the characteristic foliation is tangent to the tori $\{|w|=c\}\cap \wdt\Sigma_0$ in this region. Hence we construct our isotopy between $L_0$ and $\wdt L$ as above by choosing an isotopy between the circles $\partial D_0$ and $\sigma_0$ in $D_0$ which coincides with the circles $\{|w|=c\}$ in the region where the slope of $\chi$ varies.

\paragraph{Step 2 (Hamiltonian isotopy):} We now use classical transformations on $X$ to modify our Lagrangian isotopy $L_t$ to become Hamiltonian.

Let $(e_1^t,e_2^t)$ the continuous determination of basis of $H_1(L_t,\Z)$ that starts at $(e_1,e_2)\in H_1(L,\Z)$, and denote by $\alpha_t,\beta_t$ the corresponding symplectic actions. It is not hard to see that $e_2^0 \in H_1(L_0)$ is the class of a meridian circle $A(\{z=*\})$ and so $\beta_1 = \beta_0 =1$. Since Lagrangian isotopies preserve the Maslov class, $e_1^0 \in H_1(L_0)$ is a Maslov $2$ class with the same linking as $e_1$. There is only one such class, namely $f_1$, and so $\alpha_1 = \alpha_0 =x$. Our goal is to deform the isotopy $L_t$ inside $X$ relative to $L_{0,1}$ such that $\alpha_t = x$ and $\beta_t =1$ for all $t$. The isotopy will then be Hamiltonian as required.



Recall that $X=\C^2\priv \{z=0\}\cup\{w=0\}$ is symplectomorphic to a subset of $T^*\T^2$, for instance to $\{(\theta_1,\theta_2,p_1,p_2)\in \T^2\times (\R^*_+)^2\}$.

Start first by applying dilations $d_\lambda:(\theta_1,\theta_2,p_1,p_2)\mapsto (\theta_1,\theta_2,\lambda p_1,\lambda p_2)$. These dilations are conformally symplectic so they preserve the class of Lagrangian submanfiolds. They correspond to standard dilations $(\lambda z,\lambda w)$ in $\C^2$ and thus $L_t':=d_{\nf 1 {\beta_t}}L_t$ is a Lagrangian isotopy from $L$ to $L_0$ in $X$ with actions  $\alpha_t'$ and $\beta_t'\equiv 1$.

To correct the action $\alpha_t'$ we use translations $\tau^1_c:(\theta_1,\theta_2,p_1,p_2)\mapsto (\theta_1,\theta_2,p_1+c,p_2)$ or $\tau^2_c:(\theta_1,\theta_2,p_1,p_2)\mapsto (\theta_1,\theta_2,p_1,p_2+c)$. These transformations are symplectic, and they preserve the set $\{p_1,p_2>0\}$, and hence $X$, provided $c>0$. In $\C^2$, they correspond to inflating either the line $\{z=0\}$ or $\{w=0\}$, that is removing these lines and pasting $D(c)\times \C$ in their place. Because of the linking condition, we see that the area class of $\tau_c^1L_t'$ is $(\alpha_t'+c,1)$ while that of $\tau_c^2L_t'$ is $(\alpha_t'-c,1)$. Thus, defining $L_t'':=\tau^1_{x-\alpha_t'}L_t'$ when $\alpha_t'\leq x$ and $L_t'':=\tau^2_{\alpha_t'-x}L_t'$ when $\alpha_t'\geq x$, we get a Lagrangian isotopy from $L$ to $L_0$ by Lagrangian submanifolds with fixed action in $\C^2$, and hence a Hamiltonian isotopy as required. \cqfd

\end{document}